\newcommand{\calB}{\mathcal{B}}
\newcommand{\calD}{\mathcal{D}}
\newcommand{\calE}{\mathcal{E}}
\newcommand{\calM}{\mathcal{M}}
\newcommand{\calS}{\mathcal{S}}
\newcommand{\calL}{\mathcal{L}}
\newcommand{\Ell}{\calL}
\newcommand{\NN}{\mathds{N}}
\newcommand{\RR}{\mathds{R}}
\newcommand{\pex}{\mathds{P}^x}
\newcommand{\eex}{\mathds{E}^x}
\DeclareMathOperator{\supp}{supp}
\DeclareMathOperator{\capp}{cap}
\newcommand{\One}{\mathds{1}}
\newcommand{\rmd}{\mathrm{d}} % Abstand \:
\newcommand{\with}{:}
\newcommand{\set}[2]{\bigl\{#1\bigm|#2\bigr\}}
\renewcommand\le{\leqslant}
\renewcommand\ge{\geqslant}
\renewcommand{\c}{{\rm c}}
\newtheorem{theorem}{Theorem}[section]
\newtheorem{corollary}[theorem]{Corollary}
\newtheorem{lemma}[theorem]{Lemma}
\newtheorem{proposition}[theorem]{Proposition}
\theoremstyle{definition}
\newtheorem{definition}[theorem]{Definition}
\theoremstyle{remark}
\newtheorem{remark}[theorem]{Remark}
\def\clap#1{\hbox to 0pt {\hss#1\hss}}
\newcommand{\Hmm}[1]{\leavevmode{\marginpar{\tiny%
$\hbox to 0mm{\hspace*{-0.5mm}$\leftarrow$\hss}%
\vcenter{\vrule depth 0.1mm height 0.1mm width \the\marginparwidth}%
\hbox to 0mm{\hss$\rightarrow$\hspace*{-0.5mm}}$\\\relax\raggedright
#1}}}
\begin{document}

\title[A Persson theorem for general Dirichlet forms]
{On the decomposition principle and a Persson type theorem for
general regular  Dirichlet forms}

\author[]{Daniel Lenz$^1$}
\author[]{Peter Stollmann$^2$}

\address{ $^1$ Mathematisches Institut, Friedrich Schiller Universit\"at Jena,
  D-03477 Jena, Germany, daniel.lenz@uni-jena.de,
  URL: http://www.analysis-lenz.uni-jena.de/ }
\address{$^2$ Fakult\"at f\"ur Mathematik, TU Chemnitz, 09107 Chemnitz,
Germany}

\date{22.5.2017}

\begin{abstract}
We present a decomposition principle for general regular Dirichlet
forms satisfying a spatial local compactness condition. We use
the decomposition principle to derive a Persson type theorem for the
corresponding Dirichlet forms.  In particular our setting covers
Laplace-Beltrami operators on Riemannian manifolds, and Dirichlet forms
associated to $\alpha$-stable
processes in Euclidean space.
\end{abstract}

\maketitle

\section*{Introduction}
Both decomposition principles and Persson's theorem deal with the
essential spectrum of  suitable Laplace type operators $\Ell$ acting on
a locally compact space $X$. With notation and underlying concepts
to be explained in further detail below the \textit{decomposition
principle} asserts
$$\sigma_{ess} (\Ell) = \sigma_{ess} (\Ell_{X\setminus K}).$$
Here,  $\sigma_{ess}$ denotes the essential spectrum and
$\Ell_{X\setminus K}$ denotes the restriction of $\Ell$ to the complement
of a compact set $K$.  On the other hand \textit{Persson's theorem}
states
$$\inf \sigma_{ess} (\Ell) = \lim_{K\to X} \inf \sigma (\Ell_{X\setminus
K}).$$ Here,  $\sigma$ denotes the spectrum and the limit is taken
along the net of compact subsets of $X$.

Both Persson's theorem and the decomposition principle are concrete
instances of the general philosophy that the essential spectrum of
Laplacians 'comes from infinity'. Moreover,  Persson's theorem can
be seen as a rather direct consequence of the decomposition
principle. Indeed, the equality asserted in Persson's theorem can be
thought of as two inequalities. Now, one of them  (viz '$\geq$')
immediately  follows from the decomposition principle and the other
inequality (viz '$\leq$')  can be shown rather directly and is, in
essence, well-known.

The decomposition principle has been a mainstake in spectral theory
for at least a hundred years. Indeed, already Weyl's celebrated 1910
paper \cite{Weyl} on absence of essential spectrum for Schr\"odinger
type operators of the form $- \Delta + V$ with $V(x)\to \infty$,
$x\to \infty$, on $L^2 (\RR^d)$ crucially relies on it. These ideas
have then been taken up later to even characterize the growth of $V$
yielding absence of essential spectrum in  \cite{Molchanov, MS} and
in \cite{LSW}. In fact, we also use some ideas and concepts from the
latter article.

In 1958 Persson then proved the result named after him for general
divergence type Schr\"odinger operators, \cite{Persson}, actually
without using a decomposition principle! His result  became an
important tool in corresponding considerations in mathematical
physics.

In a spirit very similar to Weyl's original  approach, Donnelly and
Li \cite{DL} stated and used a decomposition principle for
Riemannian manifolds in their famous  work  from 1979 showing
absence of essential spectrum for Laplace-Beltrami operators
whenever the sectional curvature of the underlying manifold tends to
$-\infty$.

In the middle of the  90ies the pioneering work of Sturm
\cite{Sturm-94,Sturm-95} made it clear that strongly local Dirichlet
forms provide a proper setting for and generalization  of spectral
geometry of  Laplacians on Euclidean space and on  manifolds. The
key ingredient of this approach is the so-called intrinsic metric.
To make the theory work one  has to assume that  this intrinsic metric
generates
the original topology. (As
shown in \cite{FLW} this assumption can be relaxed to continuity of
the intrinsic metric in various circumstances, see
\cite{Stollmann-10} as well). Under this compatibility assumption
 as well as some technical assumptions
 Grillo \cite{Grillo} proved in 1998 a general Persson type theorem for
strongly local Dirichlet forms. Quite remarkably, Grillo's work does
not contain a  decomposition principle either. We note in passing that
various other parts of general spectral theory could be shown for
strongly local Dirichlet forms satisfying the compatibility
condition in the last ten years \cite{BoutetS-03,BoutetLS-09,LenzSV-09}.

All the works mentioned so far deal with regular  strongly local
Dirichlet forms (corresponding to Markov processes with continuous
paths). Recent years have now seen an ever increasing interest in
non-local Dirichlet forms (corresponding to Markov processes with
jumps). Here, most prominent cases concern Laplacians on graphs and
$\alpha$-stable processes, see Section 4 below for a selection of
references on these topics

For  Laplacians on graphs with standard weights  Persson's theorem
can be found in  Keller \cite{Keller} (see as well the earlier work
of Fujiwara for related material dealing with the normalized
Laplacian \cite{Fujiwara}). This result was then generalized to
weighted locally finite graphs by Keller and Lenz \cite{KellerL-10}.
For these graphs validity of a decomposition principle is quite
straightforward and accordingly a Persson type theorem is rather
easy to derive. However, so far neither a decomposition principle
nor Persson's theorem are known for general graphs let alone more
complicated non-local situations such as $\alpha$-stable processes.
This is the starting point of the present note.

More specifically, we present a decomposition principle (Section
\ref{sec-decomposition}) and a Persson type theorem (Section
\ref{sec-Persson}) valid for all regular Dirichlet forms satisfying
a spatial local compactness condition.

This condition is then shown to be valid in a variety of situations
(Section \ref{examples}). These situations include the already known
case of Laplace-Beltrami operators on manifolds. More importantly,
they also include  rather general graphs as well as $\alpha$-stable
processes in Euclidean space. To the best of our knowledge these are
the first results giving a decomposition principle and a Persson
theorem for jump processes which are not coming from graphs.

All these considerations are given after a first introduction into
the topic of Dirichlet forms in Section \ref{Preliminaries}. We
finally discuss in Section  \ref{sec-perturbations} a generalization
of our main results to certain Schr\"odinger type operators. More
precisely, we will be able to deal with measure perturbations where
the negative part has to fulfill some Kato condition while the
positive part is quite arbitrary.

\section{Preliminaries on Dirichlet forms and selfadjoint operators}
\label{Preliminaries}
Our investigations are situated in the context of
  (regular)  Dirichlet forms on locally compact
separable spaces. In this section we introduce the necessary
background and notation  (see e.g.\ \cite{BouleauH-91, Davies-95,
FukushimaOT-94, MaR-92}). The basic relevance of regular Dirichlet
forms in our context  comes from the fact  that they are in
one-to-one correspondence to Markov processes with paths satisfying
a regularity condition known as \textit{c\`{a}dl\`{a}g}. Thus, Dirichlet
forms provide an analytic tool to work with Markov processes.

\medskip

Throughout we let $X$ be a locally compact, separable metric space
and  $m$ a positive Radon measure on X with $\supp m = X$.  We will
only consider real valued function on $X$.  (Of course, complex
valued functions could easily be considered as well after
complexifying the corresponding Hilbert spaces and forms.) By $C_c
(X)$ we denote the set of continuous functions on $X$ with compact
support and by $C_0 (X)$  the closure of $C_c (X)$ with respect to
the supremum norm.  The space $L^2 (X,m)$ is the space of classes of
measurable and (with respect to $m$) square integrable   real valued
functions. The norm on $L^2 (X,m)$ will be denoted by
$\|\cdot\|_{L^2 (X,m)}$ or, if the space is clear from the context,
just by $\|\cdot\|$.  The space $L^\infty (X,m)$ is the space of all
classes of essentially bounded functions.

\smallskip

The characteristic function of a set $M$  will be denoted by $\One_M$
i.e. $\One_M (x) = 1$ if $x\in M$ and $\One_M (x) = 0$ else.

\smallskip

A closed non-negative form on $L^2(X,m)$ consists of a dense
subspace $\calD \subset L^2(X,m)$ and a sesquilinear  map $\calE :
\calD \times \calD \to \RR$ with $\calE (f,f)\geq 0$ for all
$f\in\calD$ such that $\calD$ is complete with respect to the
\textit{energy norm} $\|\cdot\|_{\calE}$ defined by
\[  \|u\|_{\calE} : = \left( \calE(u,u) + \| u \|_{L^2(X,m)}^2\right)^{1/2}. \]
Whenever $\calE$ is a closed form there exists a unique selfadjoint
operator $\Ell$ on $L^2 (X,m)$ with
$$\langle \Ell u, v\rangle = \calE (u,v)$$
for all $u$ in the domain of $\Ell$ and all $v\in \calD$. This operator
$\Ell$ is non-negative.

A closed form is said to be a \emph{Dirichlet form} if for any $u
\in \calD$ and any normal contraction $C : \RR \to \RR$ we have also
\[ C \circ u \in \calD \mbox{ and } \calE(C \circ u) \leq \calE(u). \]
Here, $C : \RR \to \RR$ is called a \emph{normal contraction} if
$C(0)=0$ and $|C(\xi)-C(\zeta)|\le |\xi -\zeta|$ for any
$\xi,\zeta\in\RR$. Typical examples of normal contractions are
modulus and positive and negative part of a number.  A Dirichlet
form is called \emph{regular} if $\calD \cap C_c(X)$ is dense both
in $(\calD, \| \cdot \|_{\|\cdot\|_{\calE}})$ and $(C_c(X), \| \cdot
\|_{\infty })$.

By  the fundamental connection to probability theory, see
Fukushima's classic \cite{Fukushima-80}, to every regular Dirichlet
form there exists an associated Markov process $(\Omega, (\pex)_{x\in
X}, (X_t)_{t\ge 0})$ with state space $X\cup \{\infty\}$  related to the form
via the semigroup:
$$
e^{-t\Ell}f(x) =\eex [ f\circ X_t]\mbox{  a.e.}
$$
for any $f\in L^2(X)$ and $t\ge 0$.
A regular Dirichlet $\calE$  form is said to be \textit{local} if
$\calE (f,g) = 0$ whenever $f$ and $ g$ have disjoint support. If
even  $\calE (f,g) =0$ whenever $g$ is constant on a neighborhood of
the support of $f$ then $\calE$ is said to be \textit{strongly
local}. Typical examples of operators associated to strongly local
Dirichlet forms are Laplace-Beltrami operatators on manifolds.
Typical example of operators associated to non-local Dirichlet forms
are   Laplacians on graphs and fractional Laplacians which are the generators
of $\alpha$-stable processes.

We will need the restrictions to open sets of operators associated
to a Dirichlet form.  Let $\Ell$ be the selfadjoint operator on $L^2
(X,m)$ associated to the regular Dirichlet form $\calE$ with domain
$\calD$. Let  $U$ be an open subset of $X$. Then, the restriction of
$\calE$ to
$$\overline{ \calD \cap C_c (U)}^{\|\cdot\|_{\calE}}$$
is a regular Dirichlet form on $L^2 (U,m)$ (this is easy to see, Theorem 4.4.2
from \cite{Fukushima-80} contains it).
This form will be denoted by $\calE_U$.  The selfadjoint operator
associated to $\calE_U$ will be denoted by $\Ell_U$.

Using the \emph{hitting time} of $B:=X\setminus U$
$$
\sigma_B:= \inf \set{t\ge 0}{X_t\in B}
$$
we obtain the following instance of the Feynman-Kac formula that
allows for a probabilistic interpretation of the semigroup generated
by $\Ell_U$:
\begin{equation}\label{Feynman-Kac}
e^{-t\Ell_U}f(x) =\eex\left[ f\circ X_t \cdot
\One_{\set{\omega}{\sigma_B(\omega)>t}}\right]\mbox{  a.e.}
\end{equation}

for $f\in L^2(U)$ and $t\ge 0$; see \cite{Fukushima-80}, Section 4.1
and Thm 4.4.2, p. 111

By construction the  operator $\Ell_U$ acts in $L^2 (U,m)$. In order to
compare the operators  $\Ell$ and $\Ell_U$ and their spectral properties
we will have to extend functions of  $\Ell_U$ from operators on
$L^2 (U)$ to operators on  $L^2 (X)$. We will do so by extending $\varphi(\Ell_U)$
by $0$
on $L^2 (X \setminus U, m)$ for $\varphi\in C_0(\RR)$.
This extension by $0$ is well in line with the idea that $\Ell_U$
arises from $\Ell$ by adding a potential with value $\infty$ on
$X\setminus U$ which would mean that $\Ell_U=\infty$ on $L^2 (X \setminus U, m)$.
% this merely formal
% and not mathematically correct equation can be made precise on the level of
% the
% associated quadratic forms.
%\Hmm{... hab ich gekürzt, weil ich kein schönes Zitat gefunden habe ;)}
For our later considerations the extension of $e^{-t\Ell_U}$ has the advantage
that in this way (\ref{Feynman-Kac}) is  valid for all $f\in L^2
(X)$. This extension will be done tacitly in the sequel; however, in
the proof of Theorem \ref{theorem-decomposition-principle} we have
to be a little more careful in one step.

Further, we will need some notions from potential theory of
Dirichlet forms and use the connection to stochastic processes.  The
\textit{capacity} is a set function associated to a Dirichlet form.
It measures the size of sets adapted to the form. It is defined as
follows: For $U\subset X$, $U$ open, we define
$$
\capp(U) := \inf \{ \left(\calE (v,v) + \|v\|^2_{L^2(X,m)}\right)^{1/2}
\with v \in \calD, \One_U \le v \}, $$
with the usual convention that $\inf \emptyset = \infty$.
For arbitrary $A \subset X$, we then set
\[ \capp(A) := \inf \{ \capp(U) \with A \subset U \} \]
(see \cite{FukushimaOT-94}, Section 2.1). For regular forms
the capacity of relatively compact sets can easily be seen to be
finite. The 1-equilibrium potential $e_B$ of a set of finite capacity can be
thought of as
the minimizer in the variational definition of the capacity given above, see
\cite{FukushimaOT-94},
Theorem 2.1.5. It has an intimate relation to the first hitting time of the
Markov process
$(\Omega, (\pex)_{x\in X},(X_t)_{t\ge 0})$ corresponding to $(\calE, \calD)$ viz
\begin{equation}
 e_B(x)=\eex[e^{-\sigma_B}]\label{hitting}
\end{equation}
for $m$-a.e. $x\in X$ by \cite{Fukushima-80}, formula (4.2.9) on p. 99 and Thm
4.3.5, p. 106.

\medskip

The spectrum of a selfadjoint operator $A$ will be denoted by
$\sigma (A)$ i.e.  $\sigma (A)$ is the set of all real numbers
$\lambda$ such that $A - \lambda I$ is not continuously invertible
(where $I$ denotes the identity). The spectrum of $A$ can be
decomposed in two disjoint parts viz the set of eigenvalues of
finite multiplicity and the remaining part. This remaining part is
known as the \textit{essential spectrum} of $A$ and denoted by
$\sigma_{ess} (A)$. Its characteristic feature is that it is stable
under compact perturbations, a celebrated fact going back to Weyl,
\cite{Weyl, Weyl-09}.

\section{A general decomposition
principle}\label{sec-decomposition}

In this section we will present a decomposition principle for regular Dirichlet
forms. This will
give that the essential spectrum of Dirichlet forms  is stable under
suitable restrictions to complements of compact sets. Throughout the
remainder of this section we let   a regular Dirichlet form $\calE$
on $L^2 (X,m)$ with domain $\calD$ and associated selfadjoint
operator $\Ell$ be given. The crucial
assumption will be the following, taken from \cite{LSW}.

\begin{definition}
 We say that $\Ell$ is \emph{spatially locally compact} if
 $\One_E e^{-\Ell}$ is a compact operator for every measurable $E$ with $m(E) <
\infty$.
\end{definition}

Here and in the sequel we identify the function $\One_E$ with the corresponding
bounded multiplication operator on $L^2(X)$.
\begin{remark}\label{rem}(cf. \cite{LSW}, Remark 2.7)
 \begin{itemize}
 \item[(a)] By \cite{LSW}, Thm 1.3, we could also use the resolvent or spectral
projections to  characterize spatial local compactness.

\item[(b)] As follows from  Theorem 1.3 of \cite{LSW}, spatial local
compactness is equivalent to compactness of the map $\One_E : (\calD,
\|\cdot\|_{\calE})  \longrightarrow L^2 (X)$  for all
measurable $E$ in $X$ with $m(E) < \infty$. In particular, spatial
local compactness of $\Ell$ implies spatial local compactness for all
$\Ell{'}$ such that the form domain of $\Ell{'}$ embeds into $\calD$.

\item[(c)]  If  the semigroup $e^{-t \Ell} $ is \textit{ultracontractive},
 i.e.  $e^{-t \Ell} : L^2 \rightarrow L^\infty$ for some $t > 0$,
  then $\Ell$ is spatially locally compact: In fact $\One_E e^{-t \Ell}$ factors
  through $L^\infty$ and the little Grothendieck theorem gives that it is a
Hilbert-Schmidt operator,   in particular compact. See the discussion in
\cite{Stollmann-94a}, or
\cite{DemuthSSV-95} for the case of
  positivity preserving semigroups. This is what makes our results applicable
to large classes of   manifolds, see the discussion in subsection
\ref{subsection-smooth} below for
more details.

  \item[(d)] Note that $e^{-t \Ell}$ maps into
  $L^\infty$ whenever there exists an  $\alpha>0$ with $D(\Ell^\alpha) \subset
  L^\infty$. Therefore,  the spatial local compactness of $\Ell$ for $X$ being
Euclidean space or a manifold can  sometimes easily be checked in terms of
compactness of Sobolev embeddings, i.e. in variants of
  Rellich's theorem \cite{Kato-66}, Theorem V.4.4.

  \item[(e)] The Laplacian on quantum or metric graphs is spatially locally
compact
under quite general
  assumptions, since its domain is continuously embedded in $L^\infty$, see
\cite{LenzSS-08}.

  \item[(f)] For combinatorial graphs, the condition of spatial local
compactness is
trivially satisfied,
  as $\One_E$ has finite rank in this case. Therefore we get a rather easy and
not very subtle criterion
  in that case, see below for a discussion of more general graphs.
 \end{itemize}
\end{remark}

Here comes the main result of this section.

\begin{theorem}[Decomposition principle]\label{theorem-decomposition-principle}
Let $\calE$ be a regular Dirichlet form  on $L^2 (X,m)$ with
associated selfadjoint operator $\Ell$ that is spatially locally compact. Let
$B\subset X$ be a closed
 set of finite capacity and denote $G:=X \setminus B$. Then,  the operator
 $\varphi (\Ell_G) - \varphi (\Ell)$ is compact for every $\varphi \in C_0 (\RR)$. In
particular,
$$\sigma_{ess} (\Ell_G) = \sigma_{ess} (\Ell)$$
holds.
\end{theorem}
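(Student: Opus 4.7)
My plan has four steps: reduce to one resolvent difference, obtain a probabilistic factorisation via the strong Markov property at $\sigma_B$, exploit that finite capacity of $B$ forces $m(B)<\infty$, and conclude by writing the result as a bounded operator composed with a compact one.

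For the reduction, note that
$$
\Psi := \{\varphi \in C_0(\RR) \with \varphi(\Ell_G) - \varphi(\Ell) \text{ is compact}\}
$$
is norm closed, and the functional-calculus identity $\varphi(\Ell)\psi(\Ell) = (\varphi\psi)(\Ell)$ (and the analogue for $\Ell_G$) together with the two-sided ideal property of the compacts makes $\Psi$ a $*$-subalgebra of $C_0(\RR)$. Since $\sigma(\Ell) \cup \sigma(\Ell_G) \subset [0,\infty)$ and $r(\lambda) := (1+\lambda)^{-1}$ separates points of $[0,\infty)$ and vanishes nowhere there, Stone-Weierstrass reduces the claim to compactness of $T := R_1 - R_1^G$ with $R_1 := (1+\Ell)^{-1}$ and $R_1^G := (1+\Ell_G)^{-1}$ (the latter zero-extended on $L^2(B,m)$); the essential-spectrum statement then follows from Weyl's theorem. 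Laplace-transforming \eqref{Feynman-Kac} and applying the strong Markov property at $\sigma_B$ yields, on the quasi-continuous representative of $R_1 f$,
$$
Tf(x) \;=\; \eex\!\left[\int_{\sigma_B}^\infty e^{-s} f(X_s)\,\rmd s\right] \;=\; \eex\bigl[e^{-\sigma_B}\, R_1 f(X_{\sigma_B})\bigr].
$$
Because $X_{\sigma_B} \in B$ q.e.\ on $\{\sigma_B<\infty\}$, the right-hand side depends on $R_1 f$ only through its restriction to $B$, giving the factorisation $T = V_B \circ \One_B \circ R_1$ with $V_B g(x) := \eex[e^{-\sigma_B}\, g(X_{\sigma_B})]$.

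The $1$-equilibrium potential $e_B \in \calD$ satisfies $0 \le e_B \le 1$ and $e_B = 1$ q.e.\ on $B$, so $\One_B \le e_B^2$ and hence $m(B) \le \|e_B\|_{L^2}^2 < \infty$. By Remark \ref{rem}(b), $\One_B R_1 \colon L^2(X,m) \to L^2(X,m)$ is then compact. To bound $V_B \colon L^2(B,m) \to L^2(X,m)$, observe that $V_B(x,\cdot)$ is a sub-probability measure on $B$ with total mass $e_B(x)$; Cauchy-Schwarz yields $|V_B g(x)|^2 \le e_B(x)\,V_B(g^2)(x)$, and the $m$-symmetry of $V_B$ (standard for symmetric Dirichlet forms, cf.\ \cite{FukushimaOT-94}) combined with $V_B(e_B) = e_B$ (which holds by \eqref{hitting} since $e_B \equiv 1$ on $B$ q.e.) yields, for $g$ supported in $B$,
$$
\|V_B g\|_{L^2}^2 \le \int e_B\, V_B(g^2)\,\rmd m = \int g^2\, V_B(e_B)\,\rmd m = \int_B g^2\,\rmd m = \|g\|_{L^2(B)}^2.
$$
Thus $V_B$ is a contraction and $T = V_B\One_B R_1$ is compact, as desired.

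The main obstacle will be making the probabilistic identification rigorous at the $L^2$ level: $R_1 f$ is only defined $m$-a.e., whereas $V_B$ tests it on the potentially $m$-negligible set $B$. The standard remedy is to pass throughout to the quasi-continuous version, which exists for elements of $\calD$ and is unique off polar sets, and to verify that the $0$-extension convention for $R_1^G$ on $B$ is compatible with the Feynman-Kac formula \eqref{Feynman-Kac} -- exactly the technical point the authors flag right after that display.
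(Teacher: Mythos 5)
Your Stone--Weierstrass reduction and the Hunt/Dynkin formula $R_1f-R_1^Gf=\eex\bigl[e^{-\sigma_B}\,\widetilde{R_1f}(X_{\sigma_B})\bigr]$ are both fine, and the latter would be a legitimate alternative to the paper's semigroup identity. The proof breaks down at the factorisation $T=V_B\circ\One_B\circ R_1$ through $L^2(B,m)$, for two separate reasons. First, the hitting distribution $\pex(X_{\sigma_B}\in\cdot)$ charges no set of capacity zero, but it need not be absolutely continuous with respect to $m|_B$; indeed one can have $\capp(B)>0$ while $m(B)=0$ (a compact line segment in $\RR^3$ for the classical Dirichlet form). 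In that case $\One_B R_1=0$ as an operator on $L^2(X,m)$, so your factorisation would force $T=0$, which is false since $\calD(\calE_G)\subsetneq\calD$. This is not the cosmetic quasi-continuity issue you flag at the end: there is no bounded operator out of $L^2(B,m)$ through which $T$ factors, because $T$ genuinely depends on the quasi-everywhere restriction of $\widetilde{R_1f}$ to $B$, not on its $m$-class there. Second, your contraction bound for $V_B$ rests on the identity $\int e_B\,V_B(g^2)\,\rmd m=\int g^2\,V_B(e_B)\,\rmd m$, i.e.\ on $m$-symmetry of the hitting operator $V_B=H^1_B$. That is false: $H^1_B$ is the $\calE_1$-orthogonal projection of $\calD$ onto the orthogonal complement of $\calD(\calE_G)$, hence symmetric for the form inner product but not for the $L^2(m)$ inner product. (For Brownian motion on $\RR$ and $B=\{0\}$ one has $H^1_Bf=f(0)\,e_B$, a rank-one map whose $L^2$-adjoint would have to involve a point evaluation; it is plainly not $L^2$-symmetric.)

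Note how the paper sidesteps both obstacles by never restricting anything to $B$ itself. Proposition \ref{prop-normest} bounds $\|\One_A(e^{-t\Ell}-e^{-t\Ell_G})\|$ by $e^{t/2}\sup_{x\in A}e_B(x)^{1/2}$ via Feynman--Kac and Cauchy--Schwarz, the crucial point being that the first Cauchy--Schwarz factor $\eex[|f|^2\circ X_t]=e^{-t\Ell}(|f|^2)(x)$ is controlled in $L^1(m)$ because the semigroup is sub-Markovian and hence $L^1$-contractive. Consequently $e^{-t\Ell}-e^{-t\Ell_G}$ is a norm limit of $\One_{M_n}(e^{-t\Ell}-e^{-t\Ell_G})$ with $M_n=\{e_B>1/n\}$ of finite $m$-measure, and each such term is compact by spatial local compactness of $\Ell$ and, via $\Ell_G\ge\Ell$ and Corollary 1.5 of \cite{LSW}, of $\Ell_G$. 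If you want to retain your resolvent formula you would need the analogous step, namely an $L^1(m)$ bound on $x\mapsto\eex\bigl[e^{-\sigma_B}|\widetilde{R_1f}|^2(X_{\sigma_B})\bigr]=H^1_B(|\widetilde{R_1f}|^2)(x)$, and that again requires a duality for $H^1_B$ that does not reduce to $m$-symmetry; this is precisely the difficulty your argument currently papers over.
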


\medskip

The remainder of this section is devoted to a proof of this
decomposition principle. In fact, we will start with a result that says that
the
boundary condition that defines $e^{-\Ell_G}$ is not felt too much near infinity.
\begin{proposition}\label{prop-normest}
 Let $\calE$ be a regular Dirichlet form  on $L^2 (X,m)$ with
associated selfadjoint operator $\Ell$. Let $B\subset X$ be a closed
 set of finite capacity and denote $G:=X \setminus B$. Then, for measurable
$A\subset X$ and  $t>0$ we have
 \begin{eqnarray}\label{normest}
\|\One_A \left(e^{-t\Ell} -e^{-t\Ell_G}\right)\| &\le& \sup_{x\in
A}\left[\pex\{\omega\mid \sigma_B(\omega)\le t\}\right]^\frac{1}{2}\\
&\le& e^\frac{t}{2} \sup_{x\in A} \left[ e_B(x)\right]^\frac{1}{2}
 \end{eqnarray}
\end{proposition}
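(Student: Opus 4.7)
The plan is to exploit the Feynman--Kac representation~\eqref{Feynman-Kac} for both semigroups and bound the difference by the probability of hitting $B$ before time $t$. Since $e^{-t\Ell}f(x) = \eex[f\circ X_t]$ while $e^{-t\Ell_G}f(x) = \eex[f\circ X_t\cdot \One_{\{\sigma_B>t\}}]$ (after extension by zero on $B$), subtraction gives the clean identity
\[
 (e^{-t\Ell}-e^{-t\Ell_G})f(x) \;=\; \eex\bigl[f\circ X_t\cdot \One_{\{\sigma_B\le t\}}\bigr]
\]
for $m$-a.e.\ $x$. This is the starting point and essentially isolates the ``hitting'' contribution.

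Next I would apply the Cauchy--Schwarz inequality under $\eex$ to get the pointwise bound
\[
 \bigl|(e^{-t\Ell}-e^{-t\Ell_G})f(x)\bigr|^2 \;\le\; \eex\bigl[|f\circ X_t|^2\bigr]\cdot \pex\{\sigma_B\le t\} \;=\; (e^{-t\Ell}|f|^2)(x)\cdot \pex\{\sigma_B\le t\}.
\]
Multiplying by $\One_A(x)$, pulling out $\sup_{x\in A}\pex\{\sigma_B\le t\}$, integrating in $x$, and using that $e^{-t\Ell}$ is sub-Markovian (hence an $L^1$-contraction on nonnegative functions, so $\int_X e^{-t\Ell}|f|^2\,dm \le \|f\|_{L^2}^2$), one obtains
\[
 \|\One_A(e^{-t\Ell}-e^{-t\Ell_G})f\|_{L^2}^2 \;\le\; \Bigl(\sup_{x\in A}\pex\{\sigma_B\le t\}\Bigr)\cdot\|f\|_{L^2}^2,
\]
which yields the first claimed inequality upon taking square roots and the supremum over $f$.

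For the second inequality, the elementary observation is that on the event $\{\sigma_B\le t\}$ one has $e^{t-\sigma_B}\ge 1$, hence $\One_{\{\sigma_B\le t\}}\le e^{t-\sigma_B}$. Taking $\eex$ and invoking the identity~\eqref{hitting} gives
\[
 \pex\{\sigma_B\le t\} \;\le\; e^{t}\,\eex\bigl[e^{-\sigma_B}\bigr] \;=\; e^{t}\,e_B(x),
\]
whence the bound $[\pex\{\sigma_B\le t\}]^{1/2} \le e^{t/2}[e_B(x)]^{1/2}$ follows and completes the proof.

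The only mild obstacle is bookkeeping around the extension-by-zero convention for $e^{-t\Ell_G}$ and making sure that all Feynman--Kac identities hold on the correct set of full $m$-measure so that the pointwise inequalities translate to an $L^2$ operator-norm bound; once that is fixed, the computation is exactly as sketched, and no probabilistic input beyond \eqref{Feynman-Kac} and \eqref{hitting} is needed.
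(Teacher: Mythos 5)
Your proposal is correct and follows essentially the same route as the paper: the Feynman--Kac identity isolates the term $\eex[f\circ X_t\cdot\One_{\{\sigma_B\le t\}}]$, Cauchy--Schwarz plus the sub-Markov ($L^1$-contraction) property of $e^{-t\Ell}$ gives the first bound, and the elementary estimate $\One_{\{\sigma_B\le t\}}\le e^{t-\sigma_B}$ combined with \eqref{hitting} gives the second. No gaps.
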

So, the fact that the Dirichlet boundary condition on $B$ is ``not felt
too much'' is measured in terms of the operator norm, while ``near infinity''
means on sets $A$ far away from $B$ in the sense that the probability of hitting
$B$ during the time interval $[0,t]$  when starting in $A$ is small.

This proposition contains one of main new ingredients of our method of proof
even though analogous calculations involving the Feynman--Kac formula and
Cauchy--Schwarz have been around for quite some time. See, e.g., the proof of
Theorem B.1.1 in \cite{Simon}.

\begin{proof}
 The proof uses an argument from \cite{Stollmann-94a} involving the Feynman-Kac
formula, (\ref{Feynman-Kac}) above, by which we get that,
 for any $f\in L^2$ and a.e. $x\in X$:
 \begin{eqnarray*}
  |(e^{-t\Ell} -e^{-t\Ell_G})f(x)|&=& \left| \eex\left[ f\circ X_t\left( 1 -
\One_{\set{\omega}{\sigma_B(\omega)>t}}\right)\right]
 \right|\\
 &=&
 \left| \eex\left[ f\circ X_t\One_{\set{\omega}{\sigma_B(\omega)\le
t}}\right]\right|\\
 &\le&
 \eex\left[|f|^2\circ X_t\right]^\frac12\pex\set{\omega}{\sigma_B(\omega)\le
t}^\frac12\\
 \end{eqnarray*}
by Cauchy--Schwarz. Note that the integral appearing in the  first factor gives
$$
\eex\left[|f|^2\circ X_t\right]=e^{-t\Ell}(|f|^2)(x)
$$
an integrable function, with integral bounded by $\| f\|_2^2$ as the
semigroup of a Dirichlet form is sub-Markovian (and, hence, induces
an operator from $L^1$ to $L^1$ with norm not exceeding $1$). For
the second factor, we have, by \eqref{hitting} above
 \begin{eqnarray*}
  \pex\set{\omega}{\sigma_B(\omega)\le t}&\le&
e^t\eex\left[e^{-\sigma_B}\right]\\
  &=&e^t e_B(x) ,
 \end{eqnarray*}
where $e_B$ is the one-equilibrium potential. Putting these ingredients together
we get

\begin{eqnarray*}
\|\One_A(e^{-t\Ell} -e^{-t\Ell_G})f\|^2&=& \int_A |(e^{-t\Ell}
-e^{-t\Ell_G})f(x)|^2dm(x)\\
&\le& \int_A e^{-t\Ell}(|f|^2)(x)\cdot \pex\set{\omega}{\sigma_B(\omega)\le
t}dm(x)\\
&\le&  \sup_{x\in A}\pex\set{\omega}{\sigma_B(\omega)\le t} \int_A
e^{-t\Ell}(|f|^2)(x)dm(x)\\
&\leq & \sup_{x\in A}\pex\set{\omega}{\sigma_B(\omega)\le t}
\|f\|^2\\
&\leq & e^t \sup_{x\in A}  e_B (x) \;  \|f\|^2,
 \end{eqnarray*}
which gives the assertion. \end{proof}

\begin{corollary}\label{cor-limit}
 Let $\calE$ be a regular Dirichlet form  on $L^2 (X,m)$ with
associated selfadjoint operator $\Ell$. Let $B\subset X$ be a closed
 set of finite capacity and denote $G:=X \setminus B$. Then there is a
 sequence $(M_n)_{n\in\NN}$ of sets of finite measure such that for every $t>0$
 \begin{equation}\label{limit}
 e^{-t\Ell} -e^{-t\Ell_G} = \| \cdot  \| -\lim_{n\to\infty}\One_{M_n}(e^{-t\Ell}
-e^{-t\Ell_G})
 \end{equation}
\end{corollary}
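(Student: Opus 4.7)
The plan is to read off the sequence $(M_n)$ directly from the equilibrium potential $e_B$, using Proposition \ref{prop-normest} as the single essential tool. The driving observation is that since $B$ has finite capacity, the $1$-equilibrium potential $e_B$ belongs to $\calD$ and in particular to $L^2(X,m)$, with $\calE(e_B,e_B)+\|e_B\|_{L^2}^2 = \capp(B)<\infty$ (cf.\ the variational characterization recalled in Section \ref{Preliminaries}).

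First I would define, once and for all (independently of $t$),
\[
 M_n := \bigl\{x \in X \with e_B(x) > 1/n\bigr\},
\]
using the pointwise (quasi-continuous) version of $e_B$ given by the hitting-time formula \eqref{hitting}. By Chebyshev's inequality,
\[
 m(M_n) \le n^2\,\|e_B\|_{L^2}^2 < \infty,
\]
so each $M_n$ has finite measure. By construction $e_B(x)\le 1/n$ for $m$-almost every $x\in X\setminus M_n$.

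Second, I would fix an arbitrary $t>0$ and apply Proposition \ref{prop-normest} with $A=X\setminus M_n$. The proposition yields
\[
 \bigl\|\One_{X\setminus M_n}\bigl(e^{-t\Ell}-e^{-t\Ell_G}\bigr)\bigr\|
 \;\le\; e^{t/2}\,\sup_{x\in X\setminus M_n}\bigl[e_B(x)\bigr]^{1/2}
 \;\le\; e^{t/2}\,n^{-1/2},
\]
which tends to $0$ as $n\to\infty$. Since $\One_{M_n}+\One_{X\setminus M_n}=I$ as multiplication operators on $L^2(X,m)$, rewriting
\[
 e^{-t\Ell}-e^{-t\Ell_G} \;-\; \One_{M_n}\bigl(e^{-t\Ell}-e^{-t\Ell_G}\bigr)
 \;=\; \One_{X\setminus M_n}\bigl(e^{-t\Ell}-e^{-t\Ell_G}\bigr)
\]
gives the claimed operator-norm convergence.

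The only conceptual point that needs a moment of care, and which I would flag as the main technical wrinkle, is the compatibility between the pointwise supremum appearing in Proposition \ref{prop-normest} and the $L^2$-a.e.\ nature of $e_B$: since the operator norm is insensitive to $m$-nullsets, it suffices that the bound $e_B(x)\le 1/n$ holds $m$-a.e.\ on $X\setminus M_n$, which is automatic once $M_n$ is defined through the quasi-continuous representative used in the derivation of \eqref{normest}. Beyond this, the argument is essentially bookkeeping.
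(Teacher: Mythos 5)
Your proposal is correct and coincides with the paper's own proof: the authors define $M_n=\{x\in X : e_B(x)>1/n\}$ for a measurable realization of $e_B$, note that square integrability of $e_B$ gives $m(M_n)<\infty$, and apply Proposition \ref{prop-normest} with $A=X\setminus M_n$. Your additional remarks (the explicit Chebyshev bound and the observation that the sup bound only needs to hold $m$-a.e.) are sound elaborations of the same argument.
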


\begin{proof}
We let
$$
M_n:=\set{x\in X}{e_B(x)>\frac{1}{n}}
$$
for a measurable realization of $e_B$. As $e_B$ is square integrable, $M_n$
has finite measure.

Using the preceding Proposition for $A_n:=X\setminus M_n$ we get the
assertion.
\end{proof}

\begin{proof}[Proof of Theorem \ref{theorem-decomposition-principle}]
Let
$A_{comp}$ be the set of those functions $\varphi\in C_0(\RR)$ for which
$\varphi(\Ell)-\varphi(\Ell_G)$ is compact.

\medskip

\rm{1st Step:}  Using the preceding corollary  and the assumption
that $\Ell$ is spatially locally compact, we will see that
$\exp(-\cdot)\in A_{comp}$.

\medskip

In fact, the assumption on $\Ell$ guarantees that
$\One_{M_n}e^{-t\Ell}$ is compact for every $n\in\NN$. Since $\Ell_G\ge \Ell$, Corollary
1.5 from \cite{LSW} gives
that $\One_{M_n}$ is also $\Ell_G$-relatively compact, i.e., $\One_{M_n}e^{-t\Ell_G}$
is
compact for every $n\in\NN$. Hence, $\One_{M_n}(e^{-t\Ell} -e^{-t\Ell_G})$ is compact
and so is its
limit $e^{-t\Ell} -e^{-t\Ell_G}$, by (\ref{limit}) above.

\medskip

\rm{2nd Step:} $A_{comp}=C_0(\RR)$, i.e.  $\varphi (\Ell_G) - \varphi (\Ell)$ is
compact for every $\varphi \in C_0 (\RR)$.

\medskip

We use a Stone-Weierstra{\ss} argument, more precisely \cite{ReedS-72}, Theorem
IV.9 and first record that
we can ignore the left halfline  since both $\Ell$ and $\Ell_G$ are nonnegative
operators.
Clearly,
$A_{comp}$ is closed with respect to uniform convergence. By what we checked in
the Step 1, we
already know that $A_{comp}$  separates the points and that for every $x_0\in
\RR$ there is a
$\varphi\in A_{comp}$ that is nonzero at $x_0$.
It remains to show that $A_{comp}$ is an algebra. Clearly, $A_{comp}$ is a
vector space.
For $\varphi,\psi\in A_{comp}$ we see that
\begin{equation*}
(\varphi\cdot\psi)(\Ell)-(\varphi\cdot\psi)(\Ell_G) = \varphi(\Ell)(\psi(\Ell)-\psi(\Ell_G))+
(\varphi(\Ell)-\varphi(\Ell_G))\psi(\Ell_G)
\end{equation*}
is compact and this finishes the second step.

\medskip

 \rm{3rd Step:} The 2nd Step shows the first claim of the theorem.
 Given this, the 'In particular' statement of the theorem then follows as the
essential spectrum is stable under compact perturbations. This
stability  can essentially be found e.g. in   \cite{ReedS-78},
Theorem XIII.14. However, $\Ell$ and $\Ell_G$ live on different Hilbert
spaces so that the latter result cannot be applied verbatim. At the
risk of appearing pedantic, we include the complete argument:

By \cite{ReedS-78}, Lemma 2 from Section XIII.4, called the strong spectral
mapping theorem, we infer that
\begin{equation*}
\sigma_{ess}(\Ell)=
\set{\frac{1}{\lambda}-1}{\lambda\in\sigma_{ess}((\Ell+1)^{-1})\setminus
\{0\}  },
\end{equation*}
as well as the respective formula for $ \sigma_{ess}(\Ell_G)$.
Weyl's classical result on stability of the essential spectrum under compact
perturbations,
see \cite{Weyl-09} as well as the discussion in \cite{ReedS-78}, Section
XIII.4,
gives that
$$
\sigma_{ess}((\Ell+1)^{-1})=\sigma_{ess}((\Ell_G+1)^{-1}\oplus 0) ,
$$
where the latter denotes the canonical extension of $(\Ell_G+1)^{-1}$ (which is
defined as a bounded
operator on $L^2(G,m)$) to all of $L^2(X,m)$. It follows that
$$
\sigma_{ess}((\Ell+1)^{-1}) \setminus \{0\}
=\sigma_{ess}((\Ell_G+1)^{-1})\setminus \{0\},
$$
and the above strong spectral mapping result then gives the claim.
\end{proof}

\section{A Persson type theorem}\label{sec-Persson}
In this section we present a Persson type theorem for general
regular Dirichlet forms. As discussed in the introduction such a
theorem is a rather direct consequence of a decomposition principle.

\bigskip

We first recall the following general lower bound on the essential
spectrum. The result is   well-known and variants have been used in
several  places, see e.g. \cite{DLMSY,Fujiwara,Sturm-94}. In the
form given here it can be found in the recent work \cite{HKW}.

\begin{lemma}\label{lower-bound} Let $\calE$ be a regular Dirichlet
form on $L^2 (X,m)$. If $(f_n)_{n\in\NN}$ is a sequence in $\calD$ with
$\|f_n\| =1$ for all $n\in\NN$ and $f_n\to 0$ weakly in $L^2 (X,m)$
and $\lambda = \lim_{n\to \infty}\calE (f_n, f_n)$. Then,
$$\inf
\sigma_{ess} (\Ell) \leq \lambda .$$
\end{lemma}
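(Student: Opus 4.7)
The plan is to argue by contradiction (or equivalently, to take a supremum over cutoffs below $\inf\sigma_{ess}(\Ell)$) using the classical fact that spectral projections onto $[0,\Lambda]$ with $\Lambda<\inf\sigma_{ess}(\Ell)$ have finite rank, combined with the assumed weak convergence $f_n\to 0$.

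First I would fix $\mu:=\inf\sigma_{ess}(\Ell)$ and pick an arbitrary $\Lambda<\mu$. Let $(E_t)_{t\ge 0}$ denote the spectral family of the nonnegative selfadjoint operator $\Ell$ and set $P_\Lambda:=E_\Lambda=\One_{[0,\Lambda]}(\Ell)$. The part of $\sigma(\Ell)\cap[0,\Lambda]$ below $\mu$ consists of isolated eigenvalues of finite multiplicity, so $P_\Lambda$ has finite rank and is in particular a compact operator on $L^2(X,m)$. Since $f_n\to 0$ weakly, compactness of $P_\Lambda$ yields $\|P_\Lambda f_n\|\to 0$.

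Next I would use the spectral representation of the quadratic form. Because $f_n\in\calD=D(\Ell^{1/2})$, we have
\[
\calE(f_n,f_n)=\|\Ell^{1/2}f_n\|^2=\int_0^\infty t\,d\|E_t f_n\|^2.
\]
Dropping the contribution on $[0,\Lambda]$ (which is nonnegative) gives
\[
\calE(f_n,f_n)\ge \int_\Lambda^\infty t\,d\|E_t f_n\|^2 \ge \Lambda\bigl(\|f_n\|^2-\|P_\Lambda f_n\|^2\bigr)=\Lambda\bigl(1-\|P_\Lambda f_n\|^2\bigr).
\]
Letting $n\to\infty$ and using $\|P_\Lambda f_n\|\to 0$ together with $\calE(f_n,f_n)\to\lambda$ yields $\lambda\ge\Lambda$. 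Since $\Lambda<\mu$ was arbitrary, we conclude $\lambda\ge\mu=\inf\sigma_{ess}(\Ell)$, which is the claim.

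There is no real obstacle here: the only point deserving care is the finite-rank character of $P_\Lambda$, which requires nothing more than the definition of the essential spectrum (isolated eigenvalues of finite multiplicity below $\inf\sigma_{ess}$). Everything else is the standard spectral-theorem bound $\Ell\ge \Lambda(I-P_\Lambda)$ combined with the compactness principle that compact operators map weakly null sequences to norm null sequences.
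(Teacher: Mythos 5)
Your proof is correct and complete. The paper itself does not prove Lemma \ref{lower-bound} at all --- it only cites \cite{HKW} and remarks that the result is well known --- so there is no in-paper argument to compare against; your argument is the standard one: the spectral projection $P_\Lambda=\One_{[0,\Lambda]}(\Ell)$ for $\Lambda<\inf\sigma_{ess}(\Ell)$ has finite rank, hence maps the weakly null normalized sequence to a norm-null sequence, and the bound $\calE(f_n,f_n)\ge\Lambda\bigl(1-\|P_\Lambda f_n\|^2\bigr)$ from the spectral theorem forces $\lambda\ge\Lambda$. The only point worth making explicit is that $\calD=D(\Ell^{1/2})$ with $\calE(u,u)=\|\Ell^{1/2}u\|^2$, and that $\sigma(\Ell)\cap[0,\Lambda]$ is a finite set of eigenvalues of finite multiplicity (isolated points of a compact set), which you have correctly identified as the one step needing care; note also that your argument gracefully covers the degenerate case $\sigma_{ess}(\Ell)=\emptyset$, where it shows $\lambda=\infty$.
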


Whenever $X$ is a locally compact Hausdorff space, the family of all
compact subsets of $X$ forms a net. We will need to take limits
along this net. Such limits will be written as $\lim_{K\to X}$.

\begin{theorem}[Persson's theorem]\label{Persson's-theorem}
Let $\calE$ be a regular Dirichlet form  on $L^2 (X,m)$ with domain
$\calD$ and  associated selfadjoint operator $\Ell$ that is spatially locally
compact.  Then,
$$\inf \sigma_{ess} (\Ell) =\lim_{K\to X} \inf \sigma (\Ell_{X \setminus K}).$$
\end{theorem}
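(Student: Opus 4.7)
The plan is to prove both inequalities separately; the easier direction is an immediate consequence of the decomposition principle (Theorem~\ref{theorem-decomposition-principle}), while the reverse direction requires a variational construction together with Lemma~\ref{lower-bound}.

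First I would observe that the net $K\mapsto \inf\sigma(\Ell_{X\setminus K})$ indexed by compact sets is monotone nondecreasing: as $K$ grows, $C_c(X\setminus K)$ shrinks, hence so does the form domain $\overline{\calD\cap C_c(X\setminus K)}^{\|\cdot\|_{\calE}}$ of $\calE_{X\setminus K}$, and the min-max principle gives monotonicity of the bottom of the spectrum. Thus $\lim_{K\to X}\inf\sigma(\Ell_{X\setminus K})=\sup_K\inf\sigma(\Ell_{X\setminus K})$. For the inequality $\inf\sigma_{ess}(\Ell)\ge \lim_{K\to X}\inf\sigma(\Ell_{X\setminus K})$, I would fix a compact $K$ and note that $K$ has finite capacity, because regularity of $\calE$ provides $v\in\calD\cap C_c(X)$ with $v\ge \One_K$, so $\capp(K)\le \|v\|_{\calE}<\infty$. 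Theorem~\ref{theorem-decomposition-principle} then yields $\sigma_{ess}(\Ell)=\sigma_{ess}(\Ell_{X\setminus K})$, and hence $\inf\sigma_{ess}(\Ell)=\inf\sigma_{ess}(\Ell_{X\setminus K})\ge \inf\sigma(\Ell_{X\setminus K})$. Taking the supremum over $K$ gives the desired inequality.

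For the reverse inequality, set $R:=\lim_{K\to X}\inf\sigma(\Ell_{X\setminus K})$ and choose a compact exhaustion $K_1\subset K_2\subset\cdots$ of $X$, which exists since $X$ is locally compact and separable. For each $n$, the variational characterization $\inf\sigma(\Ell_{X\setminus K_n})=\inf\set{\calE(u,u)}{u\in D(\calE_{X\setminus K_n}),\,\|u\|=1}$ yields an $f_n\in D(\calE_{X\setminus K_n})$ with $\|f_n\|=1$ and $\calE(f_n,f_n)\le \inf\sigma(\Ell_{X\setminus K_n})+1/n\le R+1/n$. Since $D(\calE_{X\setminus K_n})$ is the energy-norm closure of $\calD\cap C_c(X\setminus K_n)$ and the energy norm dominates the $L^2$ norm, each $f_n$ vanishes $m$-a.e.\ on $K_n$. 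Weak convergence $f_n\to 0$ in $L^2(X,m)$ then follows from the density of $C_c(X)$ in $L^2(X,m)$ together with boundedness of $\|f_n\|$: for every $g\in C_c(X)$ one has $\supp g\subset K_n$ for all sufficiently large $n$, whence $\scpr{f_n}{g}=0$ eventually.

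Passing to a subsequence, I may assume $\calE(f_n,f_n)$ converges to some $\lambda\le R$. Lemma~\ref{lower-bound} then gives $\inf\sigma_{ess}(\Ell)\le \lambda\le R$, completing the proof. The main obstacle is essentially bookkeeping: the analytic work is done in Theorem~\ref{theorem-decomposition-principle}, and what remains is a standard Weyl-sequence construction along an exhaustion. The one point requiring mild care is ensuring that the near-minimizers $f_n$, which a priori live in an abstract energy-norm closure, genuinely vanish on $K_n$ so that the weak-convergence argument applies.
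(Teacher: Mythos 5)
Your proposal is correct and follows essentially the same route as the paper: monotonicity of $K\mapsto\inf\sigma(\Ell_{X\setminus K})$ along the net of compact sets, the decomposition principle (Theorem~\ref{theorem-decomposition-principle}) for the inequality $\geq$, and a Weyl-type sequence along a compact exhaustion combined with Lemma~\ref{lower-bound} for the inequality $\leq$. The only cosmetic difference is that the paper picks the near-minimizers directly in the core $\calD\cap C_c(X\setminus K_n)$, whereas you take them in the closed form domain $D(\calE_{X\setminus K_n})$ and then verify that they vanish $m$-a.e.\ on $K_n$; both variants work.
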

\begin{proof}
The infimum of the spectrum is characterized by the variational
principle via
$$\inf \sigma (\Ell_U) = \inf\{\calE_U (f,f) : f\in \calD (\calE_U), \|f\|
=1\}$$ for any open subset $U\subset X$. This easily gives the
monotonicity property
$$\inf \sigma (\Ell_{X\setminus K_1}) \leq \inf \sigma (\Ell_{X\setminus
K_2})$$ whenever $K_1, K_2$ are compact with $K_1 \subset K_2$. This
in turn implies  that the limit in question  exists (with possible
value $\infty$) and actually equals
$$\sup_{K} \inf \sigma (\Ell_{X\setminus K}),
$$ where the supremum is taken  over all compact $K\subset X$. We
will call this limit $\lambda_0$.  We will now show two
inequalities.

\medskip

\textit{ $\inf \sigma_{ess} (\Ell) \leq \lambda_0$.}

\smallskip

This follows  from the previous lemma. Indeed, we can choose a
sequence $(K_n)_{n\in \NN}$  of compact subsets of $X$ with
$\lambda_0 = \lim_{n\to \infty} \inf \sigma (\Ell_{X\setminus K_n})$.
It follows from the discussion  at the beginning of the proof and,
in particular, the monotonicity property that we can assume without
loss of generality $K_n \subset K_{n+1}$ for all $n$ as well as  $X
= \bigcup_n K_n$. By definition of the restriction of $\Ell$ to the
complement of a compact set and the variational principle, we can
then pick a sequence  of functions $f_n \in C_c (X\setminus K_n)
\cap \calD$ with $\|f_n\|_{L^2 (X,m)} =1$ and
$$|\inf \sigma (\Ell_{X\setminus K_n}) - \calE (f_n, f_n)| \leq
\frac{1}{n}
$$
for all $n\in \NN$. Then, the $f_n$ vanish on $K_n$, $n\in\NN$, and
hence converge weakly to $0$ by our assumptions on the $K_n$.
Moreover, by construction $\lambda_0 = \lim_{n\to\infty} \calE
(f_n,f_n)$. Now, the previous lemma gives the desired inequality.

\medskip

\textit{$\inf \sigma_{ess} (\Ell) \geq \lambda_0$.}

\smallskip

The decomposition principle  gives $\inf \sigma_{ess} (\Ell_{X\setminus
K}) = \inf \sigma_{ess} (\Ell_{X})$. Given this, the desired statement
follows from the  (obvious) inequality
$$\inf \sigma (\Ell_{X\setminus
K}) \leq \inf \sigma_{ess} (\Ell_{X\setminus K}).$$

\medskip

Putting together the preceding two inequalities finishes the proof
of the theorem.
\end{proof}

\begin{remark}
\begin{itemize}

\item[(a)] The theorem deals with  regular Dirichlet forms provided
they are spatially locally compact. In particular the theorem covers
Laplace Beltrami operators on manifolds, Laplacians on rather
general  graphs and $\alpha$-stable processes. Details are discussed
in the next section.

\item[(b)] The limit over the compact subsets could be replaced by a
limit over arbitrary measurable relatively compact subsets (as can
be seen from simple monotonicity arguments).

\item[(c)] The limit appearing in the theorem can be expressed in
various ways.  Among them we mention the following equalities:
\begin{eqnarray*}
 &  \lim_{K\to X} & \inf \sigma (\Ell_{X \setminus K})\\
 &=& \sup_{\mbox{$K\subset X$
cpt}}
 \inf \sigma (\Ell_{X \setminus K})\\
&=&\sup_{\mbox{$K\subset X$ cpt}} \inf\{\frac{\calE (f,f)}{\|f\|^2}
: f\in \calD (X \setminus K), f\neq 0\}\\
 &=&\sup_{\mbox{$K\subset X$ cpt}} \inf\{\frac{\calE
(f,f)}{\|f\|^2} : f\in \calD\cap C_c (X \setminus K), f\neq 0\}.
\end{eqnarray*}
Here, the first equality holds by simple monotonicity arguments (as
discussed in the proof of the theorem), the second equality is just
the Rayleigh-Ritz variation principle and the last equality follows
easily from $\calD (X \setminus K) = \overline{\calD\cap C_c (X
\setminus K) }^{\|\cdot\|_{\calE}}$ (which in turn was discussed
above).
\end{itemize}
\end{remark}

\section{The range of applications -- classes of examples }
\label{examples}
We now present three classes of examples satisfying the
considerations of the preceding  sections.

\subsection{Regular Dirichlet forms on manifolds}
\label{subsection-smooth} As mentioned in Remark \ref{rem} (c), the
assumption of spatial local compactness is satisfied provided the
corresponding heat semigroup is ultracontractive. In particular
spatial local compactness holds for the Laplacian on Euclidean
space and for Laplace-Beltrami operators on large classes of
Riemannian manifolds, so that Theorems
\ref{theorem-decomposition-principle} and \ref{Persson's-theorem}
apply.

 In this way we recover the
corresponding
decomposition prinicple  of Donnelly and Li \cite{DL} and
corresponding instances of Persson's theorem of Grillo
\cite{Grillo}. As mentioned above, Grillo's work does not contain a
decomposition principle and the method of proof is limited to local
forms. Although not stated explicitly, a sort of local compactness
is needed, in that results from \cite{BiroliT-97} are used. In that
latter paper, the authors assume volume doubling and a Poincar\'e
inequality. This, in turn, implies ultracontractivity, and, moreover,
very precise pointwise estimates on Greens functions as shown in
\cite{BiroliM-95}. So the compactness properties used in
\cite{Grillo} are much stronger than the one we assume in our main
theorems.

Let us now discuss the case of manifolds a little more detailed and
note first, that we can tacitly assume that we are dealing with a
non-compact Riemannian manifold, as otherwise the appearing
essential spectra are empty, anyway. We will always consider
complete and connected weighted manifolds $(M,\mathbf{g},\mu)$ as in
\cite{Grigoryan-09}; the corresponding Laplacian $\Delta_\mu$ is
defined as in Section 4.2 of the latter reference and its negative,
$- \Delta_\mu$,  generates a regular Dirichlet form, the reader is
referred to \cite{Grigoryan-09} again for a thorough discussion of
this little more general Laplace-Beltrami operator together with the
necessary background from Riemannian geometry.
%\footnote{Note a sign change compared to the Euclidean setting  in that $\Delta_\mu$ is a
% non-negative  operator.}
A detailed account of heat kernel estimates
and the underlying geometric properties can also be found in
\cite{Grigoryan-99}, to which we refer the reader instead of
recording the relevant material here. Let us first note that
ultracontractivity does not hold on any manifold; see e.g., Example
9.17, p. 255 in \cite{Grigoryan-09}. It is true for the following
classes:
\begin{itemize}
 \item \emph{Cartan--Hadamard} manifolds, defined by the property that there
sectional curvature $K(p)$ is non-positive. This is the class of manifolds
considered in the work of Donnelly and Li, \cite{DL}; see \cite{Grigoryan-99},
Section 7.4 for ultracontractivity in this case.
\item \emph{Minimal submanifolds} of $\RR^N$; see \cite{Grigoryan-99},
Section 7.3 for ultracontractivity in this case.
\item \emph{Manifolds of bounded geometry}, defined by a uniform lower bound on
 Ricci curvature together with a positive uniform lower bound on the
injectivity radius; see \cite{Grigoryan-99},
Section 7.5.
\end{itemize}
Summarizing these instances we get
\begin{corollary}
Assume that $(M,\mathbf{g},\mu)$ belongs to the above mentioned
classes and let $\Delta$ be the associated Laplacian. Then
 \begin{enumerate}
  \item The heat semigroup is ultracontractive, in particular $-\Delta$ is
spatially locally compact.
  \item For every compact subset $K\subset M$:
  $$\sigma_{ess}( -\Delta)=\sigma_{ess}( - \Delta_{M\setminus K}).$$
  \item Perssons' theorem holds:
  $$ \inf \sigma_{ess}(- \Delta)=\sup_{K}  \inf \sigma(- \Delta_{M\setminus K}),$$
  where the supremum is taken over the net of compact subsets of $M$.
   \end{enumerate}
\end{corollary}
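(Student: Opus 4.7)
The plan is to verify the three assertions in order. All the heavy analytic content has already been packaged either into the main theorems of Sections~\ref{sec-decomposition}--\ref{sec-Persson} or into the geometric estimates cited from \cite{Grigoryan-99,Grigoryan-09}; what remains is really just to connect them.

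For part (1) I would invoke the ultracontractivity results compiled in \cite{Grigoryan-99}: Section~7.4 treats Cartan--Hadamard manifolds, Section~7.3 handles minimal submanifolds of $\RR^N$, and Section~7.5 covers manifolds of bounded geometry. Each of these produces a pointwise heat kernel bound which yields $e^{-t\Delta_\mu}\colon L^2(M,\mu)\to L^\infty(M,\mu)$ for some $t>0$. Spatial local compactness of $-\Delta_\mu$ is then immediate from Remark~\ref{rem}(c): for any measurable $E$ of finite measure, $\One_E e^{-t\Delta_\mu}$ factors through $L^\infty(E,\mu|_E)$ on a finite measure space, so the little Grothendieck theorem forces it to be Hilbert--Schmidt and in particular compact.

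For parts~(2) and~(3) I would apply Theorems~\ref{theorem-decomposition-principle} and~\ref{Persson's-theorem} respectively. The regularity of the Dirichlet form associated to $-\Delta_\mu$ on a complete weighted manifold is standard and is recalled in \cite{Grigoryan-09}. A compact subset $K\subset M$ is closed and relatively compact, and for regular Dirichlet forms relatively compact sets have finite capacity (as noted in Section~\ref{Preliminaries}); together with the spatial local compactness from~(1), this supplies all the hypotheses of Theorem~\ref{theorem-decomposition-principle}, giving~(2). Assertion~(3) then follows directly from Theorem~\ref{Persson's-theorem}, and the replacement of the net limit $\lim_{K\to M}\inf\sigma(-\Delta_{M\setminus K})$ by $\sup_K \inf\sigma(-\Delta_{M\setminus K})$ was already established at the start of the proof of that theorem from the monotonicity $\inf\sigma(-\Delta_{M\setminus K_1})\le\inf\sigma(-\Delta_{M\setminus K_2})$ when $K_1\subset K_2$.

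The only real obstacle is bibliographic rather than mathematical: one needs to make sure the cited sections in \cite{Grigoryan-99} really apply in the weighted-manifold setting of \cite{Grigoryan-09}, but this is precisely the framework developed there, so no extra work is needed.
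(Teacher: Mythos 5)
Your proposal is correct and follows exactly the route the paper intends: the corollary is stated as a summary of the preceding discussion, with ultracontractivity supplied by the cited sections of \cite{Grigoryan-99}, spatial local compactness via Remark \ref{rem}(c), finite capacity of compact sets from Section \ref{Preliminaries}, and parts (2) and (3) read off from Theorems \ref{theorem-decomposition-principle} and \ref{Persson's-theorem}. No substantive difference from the paper's argument.
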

The case of manifolds with bounded geometry is a generalization of the case of
non-negative Ricci curvature where
heat kernel bounds go back to the celebrated work of Li and Yau, \cite{LY}. Far
reaching generalizations have been obtained
in \cite{Rose-16} where uniform lower bounds are replaced by integral bounds.

Using our results and the fact that sectional curvature controls isoperimetry
and this gives, in turn, lower bounds on
the spectrum of the Laplacian just as in \cite{DL}, we get the  following
generalization of the result of Donnelly and Li from the latter reference:

\begin{corollary}
 Assume that $(M,\mathbf{g},\mu)$ belongs to the above mentioned classes and
that
 sectional curvature satisfies: $K(p)\to - \infty$ for $p\to\infty$,
then
  $$\sigma_{ess}(- \Delta)=\emptyset.$$
\end{corollary}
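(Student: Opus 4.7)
The plan is to combine the Persson-type identity just recorded in the previous corollary with the classical mechanism, due to Donnelly and Li \cite{DL}, linking sectional curvature to the bottom of the Dirichlet spectrum. By Persson we have
$$\inf \sigma_{ess}(-\Delta) = \sup_{K \subset M \text{ cpt}} \inf \sigma(-\Delta_{M\setminus K}),$$
so to show $\sigma_{ess}(-\Delta) = \emptyset$ it suffices to verify that this supremum equals $+\infty$; equivalently, for every $\Lambda > 0$ one has to produce a compact set $K_\Lambda \subset M$ with $\inf \sigma(-\Delta_{M\setminus K_\Lambda}) \geq \Lambda$.

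The curvature hypothesis translates at once into the following: for every $a > 0$ there is a compact set $K_a \subset M$ such that $K(p) \leq -a^2$ at every point $p \in M \setminus K_a$. On such a region the argument of Donnelly and Li -- which deduces a lower bound on the isoperimetric constant from a pointwise upper bound on sectional curvature, and then passes to the spectrum via Cheeger's inequality -- yields a constant $c(n) > 0$, depending only on the dimension, such that
$$\int_{M \setminus K_a} |\nabla f|^2 \, d\mu \geq c(n)\, a^2 \int_{M \setminus K_a} |f|^2 \, d\mu$$
for every $f \in C_c^\infty(M \setminus K_a)$. Since such functions form a core for $-\Delta_{M\setminus K_a}$, the Rayleigh-Ritz variational principle (cf.\ the final equality in the remark following Persson's theorem) gives $\inf \sigma(-\Delta_{M \setminus K_a}) \geq c(n)\, a^2$. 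Choosing $a$ so that $c(n)\, a^2 \geq \Lambda$ and setting $K_\Lambda := K_a$ completes the reduction, and letting $a \to \infty$ yields the claim.

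The main obstacle is thus the curvature-to-spectrum estimate itself, which is not new to this paper but is the content of Donnelly-Li's isoperimetric lemma applied outside an exhausting compact set; its adaptation to the weighted setting $(M,\mathbf{g},\mu)$ and to the broader classes listed in the preceding subsection is already available in the literature (see \cite{Grigoryan-09}). The genuinely new ingredient is that Persson's formula is now at our disposal in this generality -- which is exactly what the previous corollary provides -- so the Donnelly-Li extinction argument goes through verbatim, regardless of which of the three classes of weighted manifolds under consideration we are in.
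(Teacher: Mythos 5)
Your proposal is correct and takes essentially the same route as the paper, which likewise combines the Persson identity from the preceding corollary with the Donnelly--Li mechanism (curvature bound $\Rightarrow$ isoperimetric bound $\Rightarrow$ spectral lower bound on $M\setminus K_a$) to force $\sup_K \inf\sigma(-\Delta_{M\setminus K})=\infty$; the paper merely compresses this into a single sentence preceding the corollary, whereas you spell out the exhaustion argument.
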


\subsection{Fractional Laplacians -- $\alpha$-stable processes}
Our result also applies to various classes of jump processes
including  $\alpha$-stable processes. Dirichlet forms of such
processes have attracted tremendous attention in various respects in
recent years, including study of  eigenvalue estimates \cite{FG,FLS}
and  heat kernel estimates and estimates for solutions of the
corresponding partial differential equations
\cite{BBCK,BGK,CK,GHH,GHL}.

Let us first present the basic model. Let $X = \RR^d$ and $\Delta$
be the usual Laplacian on $L^2 (\RR^d)$. The fractional Laplacian
$\Ell_\alpha=(-\Delta)^{\alpha/2}$ is the infinitesimal generator of a
Markov process known as \textit{$\alpha$-stable process}  for $0 <
\alpha < 2$. The corresponding Dirichlet form is (up to a constant)
defined by
\[ \calE_\alpha(u, v) := \int_{X \times X \setminus D} (u(x) - u(y)) (v(x) -
v(y)) |x-y|^{-d-\alpha} \: \rmd x \rmd y, \]
\[ \calD_{\alpha} := \{ u \in L^2(X) \with \calE_{\alpha}(u) < \infty \}, \]
where $D:=\{(x,x) : x\in X\} \subset X\times X$ denotes the
diagonal.

This is obviously a non-local Dirichlet form.  It is regular since
by basic theory of Sobolev spaces the test functions are a core for
this Dirichlet form. Now, clearly a power of $\Ell_\alpha$, viz
$-\Delta$, is spatially locally compact (see Section
\ref{subsection-smooth}) and so is then  $\Ell_\alpha$ itself, by
\cite{LSW}, Thm 1.3. Hence, both the decomposition principle and the
Persson theorem of the previous section apply to $\Ell_\alpha$.

As mentioned in the introduction, this seems to be  the first result
on a decomposition principle for an operator that is neither a
Laplacian on a manifold nor on a graph.

More generally, consider $U \subset \RR^d$ open
$$
\calE(u, v) := \int_{U \times U \setminus D} (u(x) - u(y)) (v(x) -
v(y)) j(x,y) \: \rmd x \rmd y
$$
on
$$
\calD _0:= \{ u \in C_c(U) \with \calE(u) < \infty \} ,
$$
with a symmetric positive weight function $j$. This defines a closable form and
we denote by $\calD$ the domain of its closure and by $\Ell$ the corresponding
non-negative operator.

\begin{corollary}
 Assume that $j, \calE, \Ell$ are as above and that additionally:
\begin{itemize}
\item [{\rm (i)}] $\calD_0$  is dense in $C_c(U)$ with respect to
$\|\cdot\|_\infty$.
\item [{\rm (ii)}] For some $\alpha\in (0,2), c>0$:
  $$
 j(x,y)\ge c|x-y|^{-d-\alpha}\mbox{  for all  }x,y\in\RR^d .
 $$
\end{itemize}
 Then
 \begin{enumerate}
  \item $\Ell$ is spatially locally compact.
  \item For every compact subset $K\subset U$:
  $$\sigma_{ess}(\Ell)=\sigma_{ess}(\Ell_{U\setminus K}).$$
  \item Perssons' theorem holds:
  $$ \inf \sigma_{ess}(\Ell)=\sup_{K} \inf \sigma(\Ell_{U\setminus K}),$$
  where the supremum is taken over the net of compact subsets of $U$.
 \end{enumerate}
\end{corollary}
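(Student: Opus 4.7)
The plan is to verify the two hypotheses of Theorem~\ref{theorem-decomposition-principle} and Theorem~\ref{Persson's-theorem} for $\calE$, namely regularity and spatial local compactness of $\Ell$; once these are in hand, assertions (2) and (3) follow by direct application. In addition, every compact $K\subset U$ has finite capacity, since by regularity we can pick $v\in \calD\cap C_c(U)$ with $v\ge\One_K$ and thus $\capp(K)\le\|v\|_\calE<\infty$.

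Regularity is straightforward: the core $\calD_0$ lies inside $\calD\cap C_c(U)$, is dense in $(C_c(U),\|\cdot\|_\infty)$ by hypothesis~(i), and is dense in $(\calD,\|\cdot\|_\calE)$ by the very definition of $\calD$ as the closure of $\calD_0$. The Markov property of $\calE$ is standard for integral forms with symmetric positive kernel.

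The heart of the matter is spatial local compactness of $\Ell$. Hypothesis~(ii) immediately yields the form inequality
$$
\calE(u,u) \;\geq\; c\,\calE_\alpha^U(u,u), \qquad u\in\calD_0,
$$
where
$$
\calE_\alpha^U(u,u) := \int_{U\times U\setminus D}(u(x)-u(y))^2|x-y|^{-d-\alpha}\,\rmd x\,\rmd y
$$
is the ``regional'' fractional Dirichlet energy on $U$. By the previous subsection the full fractional Laplacian $\Ell_\alpha$ on $\RR^d$ is spatially locally compact; a fractional Rellich-type argument (or, equivalently, a Hilbert--Schmidt bound on the heat kernel of the censored $\alpha$-stable process in $U$) upgrades this to spatial local compactness of the operator $\Ell_\alpha^U$ associated with $\calE_\alpha^U$ on $L^2(U)$. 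The displayed form inequality produces a continuous embedding of the form domain of $\Ell$ into that of $\Ell_\alpha^U$, and Remark~\ref{rem}(b) then transfers spatial local compactness from $\Ell_\alpha^U$ to $\Ell$.

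The principal obstacle is precisely the SLC transfer just described. A direct comparison with $\Ell_\alpha$ on $\RR^d$ via zero-extension fails, because the $\RR^d$-form $\calE_\alpha$ carries additional ``boundary'' contributions $\int_U u(x)^2\int_{U^c}|x-y|^{-d-\alpha}\,\rmd y\,\rmd x$ which depend on $d(\supp u, U^c)$ and are not controlled uniformly on $\calD_0$ by $\|\cdot\|_\calE$. One is therefore forced to work with the intermediate regional operator $\Ell_\alpha^U$ and to establish its spatial local compactness by a Rellich-type compactness argument before invoking Remark~\ref{rem}(b).
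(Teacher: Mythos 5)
The paper's own proof is a one-liner and differs from yours at the decisive point: it asserts that hypothesis (ii) makes the form domain of $\Ell$ embed (via extension by zero) into the form domain $\calD_\alpha$ of the fractional Laplacian on all of $\RR^d$, and then invokes Remark~\ref{rem}(b). Your objection to precisely this step is substantive when $U\neq\RR^d$: the energy $\calE_\alpha(u,u)$ of the zero extension of $u\in\calD_0$ exceeds the regional energy by the killing term $2\int_U u(x)^2\int_{U^c}|x-y|^{-d-\alpha}\,\rmd y\,\rmd x$, and hypothesis (ii) only gives $\calE(u,u)\ge c\,\calE_\alpha^U(u,u)$, which does not control that term. (For $U=\RR^d$ the term is absent and the paper's argument is complete as written.) So you have correctly located the difficulty.

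However, your replacement step is a genuine gap rather than a fix. Spatial local compactness of the regional operator $\Ell_\alpha^U$ is asserted via a ``fractional Rellich-type argument'' or heat-kernel bounds for the censored process, but neither is available for an arbitrary open $U$: a cut-off plus Rellich argument yields compactness of $\One_E$ on the regional form domain only for $E$ compactly contained in $U$, whereas spatial local compactness demands it for \emph{every} finite-measure $E$, in particular for $E$ accumulating at $\partial U$ or escaping to infinity inside $U$. In fact the claim fails in general. Fix $\alpha\in(0,1)$ and let $U=\bigcup_n B_n$ with $B_n=B(x_n,r_n)$ pairwise at distance at least $1$, $r_n\to 0$ and $\sum_n r_n^d<\infty$, and put $u_n:=|B_n|^{-1/2}\One_{B_n}$. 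Since $\alpha<1$, each $u_n$ lies in the closure of $\calD_0$; moreover $\|u_n\|_2=1$ while $\calE_\alpha^U(u_n,u_n)$ receives contributions only from pairs of distinct, far-apart balls and is bounded by $2\,m(U)$ uniformly in $n$. Thus $(u_n)$ is bounded in the form norm, $m(U)<\infty$, and $\One_U u_n=u_n$ is orthonormal, so $\One_U$ is not compact on the regional form domain and $\Ell_\alpha^U$ is not spatially locally compact. Taking $j(x,y)=c|x-y|^{-d-\alpha}$ here satisfies (i) and (ii), so the example indicates that for irregular $U$ the obstruction you identified affects the statement itself and cannot be circumvented by passing to the regional operator; some additional hypothesis (e.g.\ $U=\RR^d$, or regularity of $\partial U$ ensuring ultracontractivity of the censored semigroup, or an upper bound on $j$ forcing the killing term into the form norm) is needed to close the argument.
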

The proof is evident: by assumption it follows that the form domain
of $\Ell$ embeds into the form domain of $\Ell_\alpha$. Hence,  $\Ell$ is
spatially locally compact by Remark \ref{rem} (b) above so that we
can apply our Theorems \ref{theorem-decomposition-principle} and
\ref{Persson's-theorem}.

Evidently, as well, if $j$ satisfies an upper bound of the form as
in (ii) above, possibly with a different $\alpha'$ then the
denseness assumption (i) is satisfied. Thus our analysis includes
large classes of examples, studied, e.g. in \cite{BBCK,FKV}; see
also the references there for more pointers to the literature. We
should also like to point out that under such stronger conditions,
heat kernel bounds are known which would give an alternative, more
complicated way of showing that the associated operator is spatially
locally compact.

Of course, we can consider the more general case of fractional
Laplacians on manifolds or even jump type Dirichlet forms on metric
measure spaces whenever Gaussian estimates or -  sufficient for our
purpose - just  ultracontractivity estimates are available. See
\cite{BGK,CK-2,GHL,HK} for a start.

\subsection{Regular Dirichlet forms on discrete spaces}
The study of Laplacians on graphs has a long history (see, e.g., the
monographs \cite{Chung,Col} and the references therein). Much
research has been devoted to study graphs with uniformly bounded
vertex degree. In recent years, various issues related to
unboundedness of the vertex degree have been studied. A glimpse of
these developments can be inferred from  the survey articles
\cite{KellerL-10,Kel} and references therein. The mentioned  issues
can be studied in various settings. The most general setting seems
to be the one introduced in \cite{KellerLenz}, which we now recall:

Let $X$ be a countable set. Let $m$ be a measure on $X$ with full
support, i.e., $m$ is a map on $X$ taking values in $(0,\infty)$. A
\emph{symmetric weighted graph over $X$} or \textit{graph} for short
 is a pair $(b,c)$ consisting of
a map $c : X \to [0,\infty)$ and a map $b : X\times X \to
[0,\infty)$ with $b(x,x)=0$ for all $x\in X$ satisfying the
following two properties:
\begin{itemize}
\item[(b1)] $b(x,y)= b(y,x)$ for all $x,y\in X$.
\item[(b2)] $\sum_{y\in X} b(x,y) <\infty$ for all $x\in X$.
\end{itemize}

To $(b,c)$ we associate the form $\calE^{\rm comp}_{b,c}$ defined on
the set $C_c (X)$ of functions on $X$ with finite support by
$$
\calE^{\rm comp}_{b,c} : C_c (X)\times C_c (X) \longrightarrow
[0,\infty)$$
$$\calE^{\rm comp}_{b,c} (u,v) = \frac{1}{2}
\sum_{x,y\in X} b(x,y) (u(x) - u(y))\overline{(v(x) - v(y))} +
\sum_x c(x) u(x)\overline{v(x)}.$$ Observe that the first sum is
convergent by properties (b1) and~(b2); the second sum is finite.
The form $\calE^{\rm comp}_{b,c}$ is closable in $\ell^2(X,m)$; the
closure will be denoted by $\calE_{b,c,m}$. Now, the forms
$\calE_{b,c,m}$ for $(b,c)$ graph over $X$ are exactly the regular
Dirichlet forms on $\ell^2 (X,m)$
(\cite{KellerLenz,FukushimaOT-94}). Clearly, this setting contains
both combinatorial graphs as well as weighted graphs with uniformly
bounded vertex degree. From our main theorems, we then obtain the
following consequence.

\begin{corollary}
 Let $\calE_{b,c,m}$ be as above with the additional assumption that $m$ is
 bounded below by a positive constant. Then the generator $\Ell_{b,c,m}$ is
spatially  locally compact. In particular, the decomposition
principle,  Theorem \ref{theorem-decomposition-principle} and
Persson's theorem,  Theorem \ref{Persson's-theorem} above apply.
\end{corollary}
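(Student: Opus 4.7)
The plan is to reduce spatial local compactness to the observation that, under the assumption $\inf_{x \in X} m(x) > 0$, any set $E \subset X$ with $m(E) < \infty$ must be finite. Indeed, if $m_0 := \inf_{x \in X} m(x) > 0$, then
\[
m(E) = \sum_{x \in E} m(x) \ge m_0 \cdot \#E,
\]
so $\#E \le m(E)/m_0 < \infty$.

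Given this, the multiplication operator $\One_E$ on $\ell^2(X,m)$ has range contained in the finite-dimensional subspace of functions supported on the finite set $E$. Consequently, for any bounded operator $T$ on $\ell^2(X,m)$, the operator $\One_E T$ has finite rank and is in particular compact. Applying this with $T = e^{-\Ell_{b,c,m}}$ directly verifies the definition of spatial local compactness: $\One_E e^{-\Ell_{b,c,m}}$ is compact for every measurable $E$ with $m(E) < \infty$.

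Once spatial local compactness is established, the two further assertions are immediate: Theorem \ref{theorem-decomposition-principle} yields the decomposition principle $\sigma_{ess}(\Ell_{b,c,m}) = \sigma_{ess}((\Ell_{b,c,m})_{X\setminus K})$ for closed sets of finite capacity (in particular for compact $K$, which here are just finite sets and hence have finite capacity), and Theorem \ref{Persson's-theorem} yields the Persson formula $\inf \sigma_{ess}(\Ell_{b,c,m}) = \lim_{K \to X} \inf \sigma((\Ell_{b,c,m})_{X\setminus K})$.

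There is essentially no obstacle here: the proof is a one-line reduction exploiting that a uniform lower bound on $m$ forces sets of finite measure to be finite, which trivializes the compactness condition. This is exactly the situation alluded to in Remark \ref{rem}(f) for combinatorial graphs, now extended verbatim to the weighted setting of \cite{KellerLenz} under the mild hypothesis $\inf m > 0$.
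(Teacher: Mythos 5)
Your argument is correct and is exactly the one the paper gives: the lower bound on $m$ forces any set $E$ of finite measure to be finite, so $\One_E$ has finite-dimensional range and $\One_E e^{-\Ell_{b,c,m}}$ is finite rank, hence compact, after which the two theorems apply directly. The paper states this in one sentence; you have merely spelled out the same reasoning.
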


As for the \textit{proof} we note that  spatial local compactness is
an evident consequence of the fact that $\One_E$ projects onto a
finite dimensional space for any set $E$ of finite measure.

The corollary contains and complements various earlier results: In
\cite{Keller} the  case of $c=0$, $b$ taking values in $\{0,1\}$
only and $m\equiv 1$ is covered (see \cite{Fujiwara} for
corresponding earlier results dealing with the normalized
Laplacian). The reference \cite{KellerL-10} states that a Persson
theorem holds for graphs once a decomposition principle is known and
notes that a decomposition principle holds for locally finite
graphs.

\section{Schr\"odinger type operators -- measure perturbations}
\label{sec-perturbations} We can carry over the ideas presented
above to also  treat Schr\"odinger operators i.e. measure
perturbations of Dirichlet forms by invoking the methods provided in
\cite{StollmannV-96}. Details are discussed next. We note in passing
that quite general potentials have also been discussed in \cite{BC},
where a Persson type theorem is presented, restricted, however to
the case of perturbations of the classical Laplacian on Euclidean
space.

From \cite{FukushimaOT-94}, we  infer that every $u \in \calD$
admits a \emph{quasi-continuous version} $\tilde{u}$, the latter
being unique up to sets of capacity zero. This allows us to consider
measure potentials in the following way: see \cite{Mazya-64},
\cite{Mazya-85} for the special case of the Laplacian and locally
finite measures, \cite{Stollmann-92} and the references in there.

Let $\calM_0 = \{ \mu : \calB \rightarrow [0, \infty] \mid \mu \mbox{ a measure
}\mu \ll \capp \}$, where $\ll$ denotes absolute continuity, i.e. the property
that $\mu(B) = 0$ whenever $B \in \calB$ and $\capp(B) = 0$. For measures in
$\calM_0$ we explicitly allow that the measure takes the value $\infty$. A
particular example is $\infty_B$, defined by
\[ \infty_B(E) = \infty \cdot \capp(B \cap E), \]
with the convention $\infty \cdot 0 = 0$. Note that for $\mu \in \calM_0$ we
have that $\mu[u, v] := \int_X \tilde{u} \tilde{v} \: \rmd \mu$ is well defined
for $u, v \in \calD(\mu)$, where $\calD(\mu) = \{ u \in \calD \mid \tilde{u} \in
L^2(X, \calB, \mu) \}$. It is easy to see that
\[ \calD(\calE + \mu) := \calD \cap \calD(\mu), \;  (\calE + \mu)[u, v] :=
\calE[u,
v] + \mu[u, v] \]
gives a closed form (not necessarily densely defined). One can check that, e.g.,
$\calE + \infty_B = \calE |_{\calD_0(B^c)}$, where $\calD_0(U) = \{u \in \calD
\mid \tilde{u} |_{U^c} = 0 \text{ q.e.} \}$ and that we get
$$
\calE + \infty_B =\calE_U
$$
provided $U:= B^c$ is open; see the discussion in Section 1 above.

If $\mu^- \in \calM_0$ is form small w.r.t. $\calE + \mu^+$, we can furthermore
define $\calE + \mu = \calE + \mu^+ - \mu^-$ by the KLMN-theorem,
\cite{ReedS-75}, Theorem X.17 and denote the associated selfadjoint operator
(which might be selfadjoint in a smaller Hilbert space!) by $\Ell+\mu$.
Note that for $\mu^+ = 0$ this form boundedness implies that
$\mu^-$ is a Radon measure, i.e., finite on all compact sets.

We now discuss, whether the results from Theorems
\ref{theorem-decomposition-principle} and
\ref{Persson's-theorem} remain valid for $\Ell+\mu$. First note that by the
results from \cite{LenzSV-09},
the spatial local compactness property carries over from $\Ell$ to $\Ell+\mu$, since
we can apply \cite{LenzSV-09},
Theorem 1.3, noting that in the notation of the latter paper $Q(\Ell+\mu)=
\calD(\calE + \mu)\subset Q(\Ell)=\calD$
with continuous embedding.

Thus, the extension of our results to Schr\"odinger type operators
$\Ell+\mu$ essentially boils down to checking, whether Proposition
\ref{prop-normest} and Corollary \ref{cor-limit} hold.

Following \cite{StollmannV-96}, we now introduce the appropriate
Kato condition, so let $\hat{\calS}_K$ and  $c_\alpha(\mu)$ be
defined as in the latter paper, with respect to $\Ell$ (called $H$ in
that latter reference).

We first summarize the approximation results from \cite{StollmannV-96}, using,
that due to our separability assumption here, we can work with sequences
instead of nets.

\begin{lemma}
 Let $\calE$ be a regular Dirichlet form  on $L^2 (X,m)$ with domain
$\calD$ and  associated selfadjoint operator $\Ell$, $\mu_+\in \calM_0$ and
$\mu_-\in \hat{\calS}_K$ with
$c_\alpha(\mu_-)<1$ for some $\alpha >0$. Then $\mu_-$ is form small w.r.t. $\Ell$.
\begin{itemize}
 \item [\rm (1)] Then $\mu_-$ is form small w.r.t. $\Ell$.
 \item [\rm (2)] There is a sequence $(V_n)$ in $L^\infty(X,m)_+$ such that
 $$
 \Ell-\mu_-+V_n\to \Ell+\mu\mbox{  as  }n\to\infty
 $$
 in the strong resolvent sense.
 \item [\rm (3)] There is a sequence $(W_n)$ in $L^\infty(X,m)_+$
 such that,
 $$
 c_\alpha(W_n)\le c_\alpha(\mu_-)\mbox{  for all  }n\in\NN .
 $$
 and
 $$
 \Ell-W_n\to \Ell-\mu_-\mbox{  as  }n\to\infty
 $$
 in the strong resolvent sense.
 \item [\rm (4)] For the semigroup differences we get the pointwise estimates
 $$
  |(e^{-t(\Ell+\mu)} -e^{-t(\Ell_G+\mu)})f(x)|\le
(e^{-t(\Ell-\mu_-)}-(e^{-t(\Ell_G-\mu_-)}|f|(x).
  $$
 \item [\rm (5)]  $e^{-t(\Ell+\mu)}$ maps from $L^1$ to $L^1$ provided
$c_\alpha(\mu_-)<1$ for some $\alpha >0$.
\end{itemize}
\end{lemma}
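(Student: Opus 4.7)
My plan is to adapt the strategy of \cite{StollmannV-96}: reduce all statements about the singular perturbation $\mu = \mu_+ - \mu_-$ to analogous statements for bounded approximants $V_n - W_n \in L^\infty(X,m)$, use Feynman--Kac there, and then take limits. Item (1) is immediate from the Kato condition: unwinding the definition of $c_\alpha$ gives $\int \tilde{u}^2\,\rmd\mu_- \le c_\alpha(\mu_-)\bigl(\calE(u,u) + \alpha\|u\|^2\bigr)$ for every $u \in \calD$, and since $c_\alpha(\mu_-) < 1$ this is exactly form smallness. For (2) I would take an exhaustion of $X$ by compact sets $K_n$ and construct an $L^\infty_+$ representation $V_n$ of $\mu_+ \wedge n$ restricted to $K_n$ (using capacitary approximation from \cite{StollmannV-96} when $\mu_+$ charges sets of zero $m$-measure), so that $V_n$ increases to $\mu_+$ in the sense of monotone convergence of the associated quadratic forms; the monotone convergence theorem for forms then yields the strong resolvent convergence $\Ell - \mu_- + V_n \to \Ell + \mu$. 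Item (3) is symmetric: choose $W_n \in L^\infty_+$ truncating $\mu_-$ monotonically from below; since $c_\alpha$ is monotone in its measure argument one gets $c_\alpha(W_n) \le c_\alpha(\mu_-)$ automatically, and monotone form convergence gives $\Ell - W_n \to \Ell - \mu_-$ in strong resolvent sense.

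The substantive step, and the one I expect to be the main obstacle, is (4), which marries the $\Ell$-vs-$\Ell_G$ probabilistic comparison from Proposition \ref{prop-normest} with measure perturbation theory. For the bounded approximants the Feynman--Kac formula and its $\Ell_G$-counterpart (both valid for $L^\infty_+$ potentials) give
\[
\bigl(e^{-t(\Ell+V_n - W_n)} - e^{-t(\Ell_G + V_n - W_n)}\bigr)f(x) = \eex\bigl[f\circ X_t \cdot e^{-\int_0^t (V_n - W_n)(X_s)\,\rmd s}\One_{\{\sigma_B \le t\}}\bigr].
\]
Since $V_n \ge 0$ one has $e^{-\int_0^t V_n(X_s)\,\rmd s} \le 1$ pointwise on paths, so bringing the modulus inside the expectation leads to
\[
\bigl|\bigl(e^{-t(\Ell+V_n-W_n)} - e^{-t(\Ell_G + V_n - W_n)}\bigr)f(x)\bigr| \le \eex\bigl[|f|\circ X_t \cdot e^{\int_0^t W_n(X_s)\,\rmd s}\One_{\{\sigma_B \le t\}}\bigr],
\]
and the right-hand side equals $\bigl(e^{-t(\Ell - W_n)} - e^{-t(\Ell_G - W_n)}\bigr)|f|(x)$ by the same two Feynman--Kac representations. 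This establishes the inequality at the approximant level; the delicate point is the passage to the limit, since $\mu_+$ may charge sets of positive capacity. Using (2) and (3), strong resolvent convergence upgrades to strong semigroup convergence, and extracting a subsequence that converges $m$-a.e.\ pointwise on both sides yields (4) in full.

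For (5) I would combine (3) with the classical Khas'minskii-type estimate: since $c_\alpha(W_n) \le c_\alpha(\mu_-) < 1$ uniformly in $n$, iterating the Feynman--Kac factor bounds $\sup_x \eex[e^{\int_0^t W_n(X_s)\,\rmd s}]$ uniformly in $n$ for small $t$, yielding a uniform $L^1 \to L^1$ estimate on $e^{-t(\Ell - W_n)}$ (as in \cite{StollmannV-96}). The pointwise Feynman--Kac domination $|e^{-t(\Ell + V_n - W_n)} f| \le e^{-t(\Ell - W_n)}|f|$, valid because $V_n \ge 0$, transfers this uniform $L^1$ bound to the approximants of $e^{-t(\Ell + \mu)}$, and Fatou across the strong resolvent limit from (2), (3) inherits the bound for $e^{-t(\Ell + \mu)}$ itself, completing item (5).
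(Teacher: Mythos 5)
The paper's own proof of this lemma is purely a list of citations to \cite{StollmannV-96} (Thm.~3.1 for (1), the proof of Thm.~6.1(ii) for (2), Thm.~3.5 for (3), Thm.~3.3 for (5), plus Feynman--Kac/Trotter for (4)), so reconstructing those arguments is the right plan, and your treatment of (4) and (5) --- Feynman--Kac domination at the level of bounded approximants, dropping the factor $e^{-\int_0^t V_n\circ X_s\,ds}\le 1$, then passing to a.e.\ limits along a subsequence --- follows essentially the same route as the cited material.

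The genuine gap is in your constructions for (2) and (3): approximating $\mu_+$ (resp.\ $\mu_-$) by functions in $L^\infty(X,m)_+$ via \emph{monotone truncation from below} cannot recover the part of the measure that is singular with respect to $m$, which is precisely the situation measure perturbations are designed to cover. For example, if $\mu_+=\infty_B$ with $m(B)=0<\capp(B)$, or if $\mu_-$ is a surface measure in the Kato class, then the only $L^\infty(X,m)_+$ function lying below the measure is $0$, so your monotone limit of forms is $\calE+(\mu_\pm)_{\mathrm{ac}}$ rather than $\calE+\mu_\pm$; even when $m(B)>0$, the monotone limit of $\calE+n\One_B$ imposes the constraint $u=0$ $m$-a.e.\ on $B$, which is strictly weaker than the quasi-everywhere constraint encoded by $\infty_B$. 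The constructions in \cite{StollmannV-96} instead regularize the measure through the resolvent/semigroup (a $\beta G_\beta\mu$--type smoothing); the resulting convergence is \emph{not} monotone and is handled by the form-convergence machinery of that paper along a net (reduced to a sequence here by separability), and the bound $c_\alpha(W_n)\le c_\alpha(\mu_-)$ in (3) comes from the explicit structure of that smoothing, not from monotonicity of $c_\alpha$ under domination of measures. Relatedly, (1) is not obtained by ``unwinding the definition'' of $c_\alpha$: the Kato constant is defined via the semigroup/resolvent acting on $L^\infty$, and converting it into the quadratic-form bound needed for the KLMN theorem is exactly the content of \cite{StollmannV-96}, Thm.~3.1.
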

\begin{proof}
 (1) follows from \cite{StollmannV-96}, Thm. 3.1.

 For (2) we can use the arguments from the proof of Theorem 6.1, part (ii) in
the latter reference using, as mentioned above, that an increasing sequence of
compact sets can be used instead of a net.

 (3) follows from \cite{StollmannV-96}, Thm. 3.5, where again, we can choose a
sequence instead of a net.

 (4) follows from (2) and the Feynman--Kac formula, respectively the Trotter
product formula.

 (5) is a consequence of \cite{StollmannV-96}, Thm. 3.3.
 \end{proof}

Note that (4) in the above lemma is reminiscent of results in
\cite{HundertmarkS-04}. We are now in position to generalize
\ref{prop-normest} to Schr\"odinger type operators.

\begin{proposition}\label{prop_schr_normest}
 Let $\calE$, $\Ell$ be as above, $\mu_+\in \calM_0$ and $\mu_-\in \hat{\calS}_K$
with
$c_\alpha(\mu_-)<\frac12$ for some $\alpha >0$. Then there exists $C$,
depending on $\alpha >0,c_\alpha(\mu_-),t>0$ such that the following holds: for
$B\subset X$  a closed
 set of finite capacity and $G:=X \setminus B$, for measurable
$A\subset X$ and  $t>0$ we have
 \begin{eqnarray}\label{schr_normest}
\|\One_A \left(e^{-t(\Ell+\mu)} -e^{-t(\Ell_G+\mu)}\right)\| &\le& C\cdot\sup_{x\in
A}\left[\pex\{\omega\mid \sigma_B(\omega)\le t\}\right]^\frac{1}{2}\\
&\le&C\cdot\sup_{x\in A}e^\frac{t}{2}\left[ e_B(x)\right]^\frac{1}{2}
 \end{eqnarray}
\end{proposition}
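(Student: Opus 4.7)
The plan is to reduce the estimate to the unperturbed situation treated in Proposition \ref{prop-normest} via the domination bound in item (4) of the preceding lemma, and then apply a Feynman--Kac/Cauchy--Schwarz argument in the spirit of the proof of Proposition \ref{prop-normest}, with the key twist that the price paid by Cauchy--Schwarz is the doubling of the negative part $\mu_-$, which is still manageable precisely because of the assumption $c_\alpha(\mu_-) < \tfrac12$.

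\textbf{Step 1 (reduction to $\mu_+ = 0$).} By item (4) of the lemma, the pointwise bound
\[
|(e^{-t(\Ell+\mu)}-e^{-t(\Ell_G+\mu)})f(x)| \le (e^{-t(\Ell-\mu_-)}-e^{-t(\Ell_G-\mu_-)})|f|(x)
\]
allows me to forget $\mu_+$ entirely and to reduce the claimed operator norm estimate on $\One_A(e^{-t(\Ell+\mu)}-e^{-t(\Ell_G+\mu)})$ to the same estimate with $\mu$ replaced by $-\mu_-$. From now on I work with the signed form $\calE - \mu_-$ on $X$ and on $G$.

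\textbf{Step 2 (Feynman--Kac with a Kato potential).} The Feynman--Kac formula for the perturbed semigroups (combined with (\ref{Feynman-Kac}) in the $G$-case) yields, $m$-a.e.,
\[
(e^{-t(\Ell-\mu_-)}-e^{-t(\Ell_G-\mu_-)})|f|(x) = \eex\bigl[\,|f|\circ X_t \cdot e^{A_t^{\mu_-}}\cdot \One_{\{\sigma_B\le t\}}\bigr],
\]
where $A_t^{\mu_-}$ is the positive continuous additive functional associated to $\mu_-$. Cauchy--Schwarz then gives
\[
|(e^{-t(\Ell-\mu_-)}-e^{-t(\Ell_G-\mu_-)})|f|(x)|^2 \le \eex\bigl[|f|^2\circ X_t\cdot e^{2A_t^{\mu_-}}\bigr]\cdot \pex\{\sigma_B\le t\},
\]
and by Feynman--Kac the first factor equals $e^{-t(\Ell-2\mu_-)}(|f|^2)(x)$.

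\textbf{Step 3 (controlling the doubled potential).} Here lies the main point, and the reason for the hypothesis $c_\alpha(\mu_-)<\tfrac12$: it guarantees $c_\alpha(2\mu_-)\le 2c_\alpha(\mu_-)<1$, so item (5) of the lemma applies to $2\mu_-$ and yields that $e^{-t(\Ell-2\mu_-)}$ is a bounded operator on $L^1(X,m)$, with norm bounded by some constant $C_1 = C_1(\alpha,c_\alpha(\mu_-),t)$. Consequently
\[
\int_A e^{-t(\Ell-2\mu_-)}(|f|^2)(x)\,dm(x) \le \bigl\|e^{-t(\Ell-2\mu_-)}(|f|^2)\bigr\|_1 \le C_1 \|f\|^2.
\]

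\textbf{Step 4 (assembly).} Squaring the $L^2$-norm, integrating over $A$, and pulling out the supremum exactly as in the proof of Proposition \ref{prop-normest}, I obtain
\[
\|\One_A(e^{-t(\Ell+\mu)}-e^{-t(\Ell_G+\mu)})f\|^2 \le C_1 \sup_{x\in A}\pex\{\sigma_B\le t\}\cdot \|f\|^2,
\]
which is the first inequality with $C = C_1^{1/2}$. The second inequality then follows verbatim from the proof of Proposition \ref{prop-normest}, using \eqref{hitting}: $\pex\{\sigma_B\le t\}\le e^t\eex[e^{-\sigma_B}] = e^t e_B(x)$.

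The only genuinely new ingredient compared to Proposition \ref{prop-normest} is the need to absorb the exponential multiplicative functional $e^{A_t^{\mu_-}}$ squared; the rest is routine. Thus the technical heart of the argument is precisely the smallness condition $c_\alpha(\mu_-)<\tfrac12$, which makes Cauchy--Schwarz compatible with the Kato-type hypothesis on $\mu_-$.
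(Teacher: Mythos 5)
Your proof is correct and follows essentially the same route as the paper: reduction to $\mu_+=0$ via the domination estimate in part (4) of the lemma, Feynman--Kac plus Cauchy--Schwarz producing the doubled negative part, and the hypothesis $c_\alpha(\mu_-)<\frac12$ ensuring $L^1$-boundedness of the semigroup with the doubled potential via part (5). The one place you diverge: the paper first invokes part (3) of the lemma to replace $\mu_-$ by a bounded function $W$ with $c_\alpha(W)\le c_\alpha(\mu_-)$ and only then applies Feynman--Kac --- for bounded potentials that formula is elementary --- whereas your direct use of the additive functional $A_t^{\mu_-}$ for a measure in $\hat{\calS}_K$ (and for the killed semigroup on $G$) quietly assumes the Revuz-correspondence version of the Feynman--Kac formula, which is true but is precisely the technical point the approximation step is designed to sidestep; you should either justify that version or insert the reduction to bounded $W$ as the paper does. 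Either way the constant and the conclusion come out the same.
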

\begin{proof}
First of all, Part (4) from the preceding lemma shows that we can
assume that $\mu_+=0$. By Part (3) we can replace the negative part
of $\mu$ by a bounded function that satisfies the same Kato
condition, call it $W$. Therefore, we are left to estimate $\|\One_A
\left(e^{-t(\Ell-W)} -e^{-t(\Ell_G-W)}\right)\|$:
  \\for any $f\in L^2$ and a.e. $x\in X$:
  \begin{small}
\begin{eqnarray*}
  |(e^{-t(\Ell-W)} -e^{-t(\Ell_G-W)})f(x)|&=& \left| \eex\left[ f\circ
X_te^{-\int_0^t W\circ X_sds}\left( 1 -
\One_{\set{\omega}{\sigma_B(\omega)>t}}\right)\right]
 \right|\\ &=&
 \left| \eex\left[ f\circ X_te^{-\int_0^t W\circ
X_sds}\One_{\set{\omega}{\sigma_B(\omega)\le
t}}\right]\right|\\
 &\le&
 \eex\left[|f|^2\circ X_t\cdot e^{-2\int_0^t W\circ
X_sds}\right]^\frac12\pex\set{\omega}{\sigma_B(\omega)\le
t}^\frac12
 \end{eqnarray*}
 by Cauchy--Schwarz. Note that the integral appearing in the  first factor gives
$$
\eex\left[|f|^2\circ X_te^{-2\int_0^t W\circ
X_sds}\right]=e^{-t(\Ell-2W)}(|f|^2)(x)
$$
\end{small}

an integrable function  of $x$, with integral bounded by
$e\|^{-t(\Ell-2W)}\|_{1,1}\| f\|_2^2$.
This is finite as the semigroup maps $L^1$ to itself by \cite{StollmannV-96},
Thm. 3.3 or part (5) of the preceding Lemma, where we use that $c_\alpha(2W) =2
c_\alpha(W)$. Moreover the norm from  $L^1$ to  $L^1$, indicated by the
subscript $1,1$ above can be estimated in terms of the given quantities. This
gives the asserted estimate just like in the proof of Proposition
\ref{prop-normest} above.
\end{proof}

The remaining steps in the proof of our main results can be easily
adapted as well to give:

\begin{theorem}\label{main_schr} Let $\calE$ be a regular Dirichlet form  on
$L^2 (X,m)$ with
associated selfadjoint operator $\Ell$ that is spatially locally compact.  Let
$\mu_+\in \calM_0$ and $\mu_-\in \hat{\calS}_K$
 with $c_\alpha(\mu_-)<\frac12$ for some $\alpha >0$.
 \begin{itemize}
  \item [\rm (1)] Let $B\subset X$ be a closed set of finite capacity and
denote $G:=X \setminus B$. Then,  the operator $\varphi (\Ell_G) - \varphi (\Ell)$ is
compact for every $\varphi \in C_0 (\RR)$. In particular, $$\sigma_{ess}
(\Ell_G+\mu) = \sigma_{ess} (\Ell+\mu)$$
 holds.
  \item [\rm (2)] $$\inf \sigma_{ess} (\Ell+\mu) =\lim_{K\to X} \inf \sigma (\Ell_{X
\setminus K}+\mu).$$
 \end{itemize}
\end{theorem}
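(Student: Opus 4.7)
The plan is to transport the proofs of Theorems \ref{theorem-decomposition-principle} and \ref{Persson's-theorem} into the Schr\"odinger setting, using Proposition \ref{prop_schr_normest} in place of Proposition \ref{prop-normest}. Three ingredients are already in hand: first, by the discussion preceding Proposition \ref{prop_schr_normest}, \cite{LenzSV-09} Thm.~1.3 together with the continuous embedding $\calD(\calE + \mu) \hookrightarrow \calD$ transfers spatial local compactness from $\Ell$ to both $\Ell + \mu$ and $\Ell_G + \mu$; second, the KLMN bound $c_\alpha(\mu_-) < 1/2$ guarantees that both perturbed operators are semibounded below by a common constant $-c_0$; third, Proposition \ref{prop_schr_normest} provides the perturbed norm estimate.

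For Part~(1), I would first establish the analog of Corollary \ref{cor-limit}: setting $M_n := \{x \in X : e_B(x) > 1/n\}$ (of finite measure since $e_B \in L^2$) and applying Proposition \ref{prop_schr_normest} to $A_n := X \setminus M_n$ yields
\[ e^{-t(\Ell + \mu)} - e^{-t(\Ell_G + \mu)} \;=\; \lim_{n \to \infty} \One_{M_n} \bigl( e^{-t(\Ell + \mu)} - e^{-t(\Ell_G + \mu)} \bigr) \]
in operator norm. Since $\One_{M_n} e^{-t(\Ell + \mu)}$ and $\One_{M_n} e^{-t(\Ell_G + \mu)}$ are compact by the transferred spatial local compactness, the difference is compact, and so is its norm limit. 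After the harmless shift $\tilde \Ell := \Ell + \mu + c_0 \geq 0$ and $\tilde \Ell_G := \Ell_G + \mu + c_0 \geq 0$, the Stone--Weierstra\ss\ step from the proof of Theorem \ref{theorem-decomposition-principle} goes through verbatim to give that $\varphi(\Ell + \mu) - \varphi(\Ell_G + \mu)$ is compact for every $\varphi \in C_0(\RR)$. The essential spectrum equality then follows from the same strong spectral mapping argument as before, applied to $(\tilde \Ell + 1)^{-1}$ and $(\tilde \Ell_G + 1)^{-1}$, extending the latter by zero to account for the different ambient Hilbert space on which it a priori lives.

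For Part~(2), the proof of Theorem \ref{Persson's-theorem} carries over once Lemma \ref{lower-bound} is noted to hold for $\Ell + \mu$; this is a standard Weyl-sequence argument requiring no modification for semibounded forms. Monotonicity of $K \mapsto \inf \sigma(\Ell_{X \setminus K} + \mu)$ gives existence of $\lambda_0 := \lim_{K \to X} \inf \sigma(\Ell_{X \setminus K} + \mu)$; the upper bound $\inf \sigma_{ess}(\Ell + \mu) \leq \lambda_0$ comes from compactly supported quasi-modes $f_n$ for $\Ell_{X \setminus K_n} + \mu$ (which escape every compact set and hence converge weakly to $0$); the lower bound $\inf \sigma_{ess}(\Ell + \mu) \geq \lambda_0$ is immediate from Part~(1) combined with $\inf \sigma(\Ell_{X \setminus K} + \mu) \leq \inf \sigma_{ess}(\Ell_{X \setminus K} + \mu)$.

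The main obstacle is the careful bookkeeping in the case when $\mu_+$ takes the value $\infty$ on sets of positive capacity: then $\Ell + \mu$ and $\Ell_G + \mu$ act on strict subspaces of $L^2(X, m)$ and $L^2(G, m)$ respectively, so the semigroup differences, the extensions by zero in the strong spectral mapping step, and the weak convergence of the Weyl sequences all have to be interpreted consistently across these two Hilbert spaces. Apart from this bookkeeping, which is handled exactly as in the unperturbed case, the argument is an essentially mechanical adaptation of the proofs in Sections \ref{sec-decomposition} and \ref{sec-Persson}.
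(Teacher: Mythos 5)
Your proposal is correct and follows exactly the route the paper intends: the paper's entire proof of Theorem \ref{main_schr} is the remark that the arguments of Theorems \ref{theorem-decomposition-principle} and \ref{Persson's-theorem} carry over once Proposition \ref{prop_schr_normest} and the transferred spatial local compactness are in hand, and you carry out precisely that adaptation. Your additional care about the shift by $c_0$ to restore nonnegativity before the Stone--Weierstra\ss{} step and about the Hilbert-space bookkeeping when $\mu_+$ is infinite on a set of positive capacity addresses the only points where the adaptation is not literally verbatim.
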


\bigskip

\section*{Acknowledgments} D.L.  gratefully acknowledges many
inspiring discussions with Matthias Keller on a wide range of topics
related to the present paper as well as partial support form German
Research Foundation (DFG).

\def\cprime{$'$}

\end{document}